\theoremstyle{definition}
\newtheorem{defn}{Definition}[section]
\newtheorem{rmk}[defn]{Remark}
\theoremstyle{plain}
\newtheorem{prop}[defn]{Proposition}
\newtheorem{lemma}[defn]{Lemma}
\newtheorem{thm}[defn]{Theorem}
\newtheorem{cor}[defn]{Corollary}
\newcommand{\lra}{\longrightarrow}
\newcommand{\lla}{\longleftarrow}
\newcommand{\hra} {\hookrightarrow}
\newcommand {\ssra} {\Rightarrow}
\newcommand{\cra}{{\ooalign{$\longrightarrow$\cr\hidewidth$\circ$\hidewidth\cr}}}
\newcommand{\cla}{{\ooalign{$\longleftarrow$\cr\hidewidth$\circ$\hidewidth\cr}}}
\newcommand{\sr}{\stackrel}
\newcommand{\W}{\mathbb{W}}
\newcommand{\F}{\mathbb{F}}
\newcommand{\Z}{\mathbb{Z}}
\newcommand{\ZZ}{\mathbb{Z}}
\newcommand{\NN}{\mathbb{N}}
\newcommand{\HH}{\mathrm{H}}
\newcommand{\HF}{\HH\F}
\newcommand{\sK}{\mathcal{K}}
\newcommand{\af}{\mathfrak{a}}
\newcommand{\fC}{\mathfrak{C}}
\newcommand{\sus}{\Sigma}
\newcommand{\tensor}{\otimes}
\newcommand{\iso}{\cong}
\newcommand{\dsum}{\oplus}
\newcommand{\we}{\simeq}
\newcommand{\Ei}{\Ealg_{\infty}}
\newcommand{\oo}{\overline}
\newcommand{\ov}{\oo{v}}
\newcommand{\h}{\hat}
\newcommand{\wt}{\widetilde}
\newcommand {\BPn} {BP\langle n \rangle}
\newcommand{\bu}{\bullet}
\newcommand{\Ab}{A_{\bu}}
\newcommand{\Bb}{B_{\bu}}
\newcommand{\Yb}{Y_{\bu}}
\newcommand{\Ub}{U_{\bu}}
\newcommand{\Mb}{M_{\bu}}
\newcommand{\Nb}{N_{\bu}}
\newcommand{\HFpb}{\HF_{p\bu}}
\newcommand{\Lmu}{\Lambda^{\mu}}
\newcommand{\Lf}{\Lambda^{f}}
\newcommand{\Lb}[1]{\Lambda_{\bu}^{#1}}
\newcommand{\Lfb}{\Lb{f}}
\newcommand{\Lmb}{\Lb{\mu}}
\newcommand{\cbu}[1]{c(#1)_{\bu}}
\newcommand{\w}[1]{w(#1)}
\newcommand{\set}[1]{\{#1\}}
\newcommand{\with}{\ \vrule\ }
\newcommand{\<}{\langle}
\renewcommand{\>}{\rangle}
\DeclareMathOperator{\Tor}{Tor}
\DeclareMathOperator*{\colim}{colim}
\DeclareMathOperator{\supp}{supp}
\newcommand{\Page}[1]{E^{#1}} % pages of the SS
\newcommand{\moravaE}{E} %for morava-E, to make it distinct from E_n-algebras
\newcommand{\ext}[2]{#1\<#2\>} % exterior algebra. Lukas has seen this notation somewhere. antoher common one is \Lambda, but we already use it for something else
\newcommand{\Ealg}{\mathbb{E}} % for E_n-algebras and E_\infty-algebras
\newcommand{\Defn}[1]{\textbf{#1}} % for definition. default is \emph, we used to use \underline, but Lukas thinks this does not look so nice.
\title[Homology of Connective Morava $\moravaE$-theory]{The Homology of Connective Morava $\moravaE$-theory with~coefficients~in~$\F_p$}
\author{Lukas Katth{\"a}n}
\address{Institut f\"ur Mathematik, Goethe-Universit\"at Frankfurt, Germany}
\email{katthaen@math.uni-frankfurt.de}
\author{Sean Tilson}
\address{Bergische Universit\"at Wuppertal} 
\email{seantilson@gmail.com}
\subjclass[2010]{Primary: 55P43, 55T15; Secondary: 55N20.}
\thanks{Both authors were supported by the German Research Council DFG-GRK~1916. Moreover, the first author was supported by the German Research Council, grant KA 4128/2-1.}
\begin{document}

\begin{abstract}
	Let $e_n$ be the connective cover of the Morava $\moravaE$-theory spectrum $\moravaE_n$ of height $n$.
	In this paper we compute its homology $\HH_*(e_n;\F_p)$ for any prime $p$ and $n \leq 4$ up to possible multiplicative extensions.
	In order to accomplish this we show that the K\"unneth spectral sequence based on an $\Ealg_3$-algebra $R$ is multiplicative when the $R$-modules in question are commutative $S$-algebras.
	We then apply this result by working over $BP$ which is known to be an $\Ealg_4$-algebra.
\end{abstract}

\maketitle
\tableofcontents
\section{Introduction}
In this paper we compute $\HH_*(e_n;\F_p)$ when $n\leq 4$, where $e_n$ is the connective cover of height $n$ Morava $\moravaE$-theory $\moravaE_n$ at the prime $p$. 
While $\moravaE_n$ has no homology with coefficients in $\F_p$, we find that $\HH_*(e_n;\F_p)$ contains $\HH_*(\BPn;\F_{p^n})$ as a subalgebra when $n\leq 4$.

The difference between $\HH_*(\BPn;\F_p)$ and $\HH_*(e_n;\F_p)$ is seen by the existence of classes which we denote $f_I$ for $I\subseteq\{1,\ldots n\}$.
These classes may be of interest in light of the recent work of Lawson in which he establishes that $\BPn$ can not be an $\Ei$ ring spectra when $p=2$ and $n\geq 4$, see \cite{LawsonBPEinfty}.
At the heart of his argument is the failure of $\HH_*(\BPn;\F_p)$ to be a subalgebra of the dual Steenrod algebra which is closed under the action of certain secondary Dyer-Lashof operations.
As $e_n$ is a commutative $S$-algebra, perhaps these $f_I$'s may be related to this difference in structure.

Our method of computation is relatively straightforward.
We use the K\"unneth spectral sequence based on $BP$ and show that it collapses.
We show this collapse by using a result of Baker and Richter from \cite{BakerRichterAdams} relating Massey products to Toda brackets, \Cref{thm: Baker Richter B.2}.
To apply this result we must show that the relevant K\"unneth spectral sequence is multiplicative.
This is why we base our spectral sequence on $BP$ which is known to be an $\Ealg_4$-algebra by work of Basterra and Mandell, see \cite{BasterraMandellBP}.
We then show that 
\[
\Tor_{s}^{\pi_* BP}(\HH_*(BP;\F_p),\pi_*e_n)_t \ssra \HH_{s+t}(e_n;\F_p)
\]
is multiplicative in \Cref{sec:mult}.
This uses work of Mandell from \cite{MandellHULKsmash} on categories of modules over $\Ealg_k$-algebras where $k \leq 4$.

To state our main theorem, we use the notation $[n] := \set{1,\dotsc, n}$, $\w{i} := p^i -1$ and $m(A,B) := \#\set{(a,b)\in A\times B\with a > b}$.
Further, for a ring $R$ we write $\ext{R}{x_1, x_2, \dotsc}$ for the exterior algebra with coefficients in $R$ and the indicated generators.
\begin{thm}
	When $n\leq 4$, the K\"unneth spectral sequence converging to $\HH_*(e_n;\F_p)$ collapses at the $\Page{2}$-page.
	Thus we have an isomorphism of algebras
	\[
		\Page{0}(\HH_*(e_n;\F_p))\iso
		\HH_*(BP;\F_p) \tensor_{\F_p} A/\af \tensor_{\F_p} \ext{\F_p}{\ov_{n+1},\ov_{n+2},\ldots},
	\]
	where $A = \ext{e_{n*}}{f_I\with I \subseteq [n]}$ is an exterior algebra and $\af$ is the ideal generated by the following relations:
	\begin{align*}
		u^{w(i)} u_i &\quad\text{for } 0 \leq i \leq n;\\
		u^{w(\min I)} f_I &\quad \text{for } I \subseteq [n];\\
		u_a f_{I \cup b} - u_b f_{I \cup a} &\quad \text{for } I \subseteq [n],\  a,b \in [n] \text{ with } a,b < \min(I)
	\end{align*}
	and 
	\[
		f_I\cdot f_J -
		\begin{cases}
		(-1)^{m(I \setminus i_0, J)} u_{i_0} f_{(I \setminus i_0) \cup J} &\text{ if } i_0\geq j_0 \text{ and } (I \setminus i_0) \cap J  = \emptyset; \\
		(-1)^{m(I, J \setminus j_0)} u_{j_0} f_{I \cup (J \setminus j_0)} &\text{ if } j_0\geq i_0 \text{ and } I \cap (J\setminus j_0) = \emptyset; \\
		0 & \text{ otherwise} 
		\end{cases}
	\]
	for all $I, J \subset [n]$, where $i_0 := \min(I)$, $j_0 := \min(J)$.
\end{thm}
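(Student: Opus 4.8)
The plan is to identify the $E_2$-page as a bigraded algebra, to compute its generators, relations and products directly, and then to deduce the collapse from the multiplicativity of the spectral sequence. Since $e_n$ is a $BP$-algebra there is an equivalence $\HF_p\wedge e_n\we(\HF_p\wedge BP)\wedge_{BP}e_n$, so the spectral sequence under consideration is the K\"unneth spectral sequence with
\[
E_2\iso\Tor^{\pi_*BP}\bigl(\HH_*(BP;\F_p),\pi_*e_n\bigr),
\]
in which $\HH_*(BP;\F_p)=\F_p[t_1,t_2,\dotsc]$ carries the $\pi_*BP$-module structure coming from the right unit $\eta_R$, and $\pi_*e_n$ carries the structure coming from the orientation $\pi_*BP\ra\pi_*e_n$ classifying the formal group law carried by $e_n$. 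First I would record this orientation: up to units, $p\mapsto p$, $v_i\mapsto u_iu^{w(i)}$ for $1\le i\le n$ with $u_n$ a unit, and $v_i\mapsto 0$ for $i>n$.

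Next I would compute the $\Tor$ by a Koszul resolution organised along the sequence $(p,v_1,v_2,\dotsc)$ in $\pi_*BP$. Each of $p,v_{n+1},v_{n+2},\dotsc$ acts as zero on one of the two modules ($p$ because $\HH_*(BP;\F_p)$ is an $\F_p$-module, and $v_j$ for $j>n$ because it maps to zero in $\pi_*e_n$) while remaining a non-zero-divisor on the other, so they split off an exterior factor $E_{\F_{p^n}}[\oo{p},\ov_{n+1},\ov_{n+2},\dotsc]$ concentrated in filtration $1$; the polynomial generators $t_i$, after extending scalars to $\F_{p^n}=\pi_0e_n/p$, contribute the factor $\HH_*(BP;\F_{p^n})$. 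The heart of the computation is the sequence $v_1,\dotsc,v_n$, which maps to $u_iu^{w(i)}$; this is \emph{not} a regular sequence on $\pi_*e_n$, and I expect its Koszul homology to be exactly the module $A/\af$, with the truncation relations $u^{w(i)}u_i$ and $u^{w(\min I)}f_I$, the syzygy relations $u_af_{I\cup b}-u_bf_{I\cup a}$, and the placement of $f_I$ in homological filtration $|I|-1$ all falling out of the Koszul differential.

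The third step is to pin down the products. By the multiplicativity established in \Cref{sec:mult} --- which rests on the $E_4$-structure of $BP$ from \cite{BasterraMandellBP} together with Mandell's theory of modules over $E_4$-algebras \cite{MandellHULKsmash} --- the $E_2$-page is a bigraded algebra, and $f_I\cdot f_J$ can be evaluated on representing Koszul cycles. The sign incurred in reordering wedge factors should produce the factor $(-1)^{m(I,J)}$, the minimal elements $i_0,j_0$ should record which generator is contracted against the differential, and the product should vanish precisely when a Koszul generator would be repeated, that is, when the relevant intersection is nonempty. I expect this bookkeeping to be the main obstacle: keeping every sign and every case of the product formula correct, and verifying that all the resulting relations are homogeneous for the filtration, so that they present the associated graded $E^0$ rather than merely the underlying module.

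Finally I would deduce the collapse. Because the spectral sequence is multiplicative, each differential $d_r$ is a derivation; since $d_r$ lowers homological filtration by $r\ge 2$, no class in filtration $0$ or $1$ can support a differential, so every algebra generator in those filtrations --- the factor $\HH_*(BP;\F_{p^n})$, the classes $u,u_i$, the exterior classes $\oo{p},\ov_j$, and the $f_I$ with $|I|\le 2$ --- is automatically a permanent cycle. For $n\le 2$ this already exhausts the generators and forces $d_r=0$ for all $r$. For $n=3,4$ there remain only finitely many generators $f_I$ with $|I|\ge 3$, sitting in filtration $\ge 2$; here I would invoke the collapse criterion of Baker and Richter \cite{BakerRichterAdams} and check by direct inspection of bidegrees that these classes admit no nonzero differential. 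Once every generator is a permanent cycle the derivation property yields $d_r=0$ for all $r\ge 2$, giving both the collapse and the asserted isomorphism; I expect the hypothesis $n\le 4$ and the possibility of genuine multiplicative extensions in $\HH_*(e_n;\F_p)$ to enter precisely at this degree-sensitive final stage.
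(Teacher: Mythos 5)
Your computation of the $E_2$-page follows the same route as the paper (\Cref{lemma: reduction} and \Cref{prop:koszulhomology}): split off $\HH_*(BP;\F_{p^n})$ and the exterior factor on $\oo{p},\ov_{n+1},\ov_{n+2},\dotsc$, then compute the Koszul homology of the non-regular sequence $u_iu^{\w{i}}$ acting on $e_{n*}$ to obtain $A/\af$. Your collapse argument also agrees with the paper's in low filtration: generators in filtrations $0$ and $1$ cannot support differentials, which settles $n\le 2$, and since every class in $\Tor$ has even internal degree while $d_2$ shifts internal degree by $1$, the classes $f_{i,j,k}$ in filtration $2$ are permanent cycles as well, which settles $n=3$.

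The gap is at $n=4$, which is where the real work of the paper lies. The class $f_{\{1,2,3,4\}}$ sits in filtration $3$ and can a priori support a $d_3$ landing on the $0$-line; $d_3$ shifts internal degree by $2$, so the parity argument does not apply, and the target bidegree is \emph{not} zero: in internal degree $2(p^2+p^3+p^4-2)$ the $0$-line contains nonzero classes $xu^k$ with $x$ a monomial in $\HH_*(BP;\F_{p^n})$ of degree $2m$, $m\ge p^2+p^3$, and $k=p^2+p^3+p^4-2-m\le p^4-2$. So ``direct inspection of bidegrees'' cannot rule out this differential, and the Leibniz rule gives no constraint since $f_{\{1,2,3,4\}}$ is an indecomposable algebra generator. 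Moreover, \Cref{thm: Baker Richter B.2} is not a bidegree-checking collapse criterion; it is a comparison theorem between Massey products in the spectral sequence and Toda brackets in the target, and invoking it requires three inputs that your proposal never produces: (i) the Massey product identity $(-1)^{\#I+m(I,J)}f_{I\cup J}\in\<f_I,u^{\w{j_0}},f_J\>$, proved by an explicit computation with Koszul cycles (\Cref{prop:massey for f_I}); (ii) the fact that the corresponding Toda bracket is \emph{defined} in the target, i.e.\ that the relation $u^{\w{i}}f_{i,j}=0$ holds in $\HH_*(e_n;\F_p)$ itself and not merely in the associated graded --- a multiplicative-extension statement, which the paper settles by observing that $u^{\w{i}}f_{i,j}$ has odd total degree while every class in filtration $0$ has even total degree; and (iii) the absence of crossing differentials. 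With these in place, Baker--Richter shows that $f_{\{1,2,3,4\}}$ detects an element of the Toda bracket $\<f_{i_1,i_2},u^{\w{i_3}},f_{i_3,i_4}\>$ and hence is a permanent cycle. This mechanism is the heart of the paper's proof of \Cref{thm: collapse}, and it is exactly what is missing from your argument.
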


The notation $\oo{v}_i$ is used to denote elements coming from the Koszul complex.
Recall here that $\Page{0}(\HH_*(e_n;\F_p))$ is the associated graded of $\HH_*(e_n;\F_p)$ with respect to the K\"unneth filtration.
In fact, we have a splitting of rings
\[
	\HH_*(e_n;\F_p)\iso \pi_*(\HF_p\wedge_{\BPn}e_n)\tensor_{\F_p} \HH_*(\BPn;\F_p)
\]
when $n\leq 4$, see \Cref{cor: mult splitting}.
However, there are potential multiplicative extensions involving the left hand tensor factor when $n>2$.
These issues are addressed in \Cref{subsec: extension problems}.
For example, the relation $u^{p^i-1}f_{i,j}=0$ always holds in homotopy.

We are also able to compute the K\"unneth spectral sequence converging to the homotopy of the relative smash product $\HF_p\wedge_{BP} e_n$ for $n\leq 5$.
\begin{thm}
	When $n\leq 5$, the K\"unneth spectral sequence converging to $\pi_*(\HF_p\wedge_{BP} e_n)$ collapses at the $\Page{2}$-page.
	Thus we have an isomorphism of of algebras
	\[
		\Page{0}(\pi_*(\HF_p\wedge_{BP} e_n))\iso
		A/\af \tensor_{\F_p} \ext{\F_p}{\ov_{n+1},\ov_{n+2},\ldots},
	\]
	where $A$ and $\af$ are as above.
\end{thm}

We have the following result for $n=2$:
\begin{cor}
	For $n=2$ we have that 
	\[
		\pi_*(\HF_p \wedge_{BP\<2\>}e_2)\iso\frac{\F_{p^2}[[u_1]][u,f_{1,2}]}{(u_1u^{p-1},u^{p^2-1},uf_{1,2},f_{1,2}^2)}.
	\]
\end{cor}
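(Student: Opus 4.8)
The plan is to read the statement off the Theorem specialised to $n=2$, by isolating the left-hand factor of the ring splitting $\HH_*(e_2;\F_p)\iso \pi_*(\HF_p\wedge_{BP\<2\>}e_2)\tensor_{\F_{p^2}}\HH_*(BP\<2\>;\F_{p^2})$. First I would record the standard identification $\HH_*(BP\<2\>;\F_{p^2})\iso \HH_*(BP;\F_{p^2})\tensor_{\F_{p^2}} E_{\F_{p^2}}[\ov_3,\ov_4,\dotsc]$, whose extra exterior generators $\ov_i$ for $i>2$ are precisely the classes that the Theorem attributes to the vanishing of $v_i$ in $e_{2*}$. Cancelling this tensor factor against the matching part of the $E_\infty$-page in the Theorem then pins down the associated graded of the factor we want:
\[
E^0\bigl(\pi_*(\HF_p\wedge_{BP\<2\>}e_2)\bigr)\iso (A/\af)\tensor_{\F_{p^2}}E_{\F_{p^2}}[\oo{p}],
\]
where $A/\af$ denotes the $n=2$ instance of the ring appearing in the Theorem.

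The substantive computation is to evaluate $A/\af$ explicitly at $n=2$. Here $A=E_{e_{2*}}[f_I\with I\subseteq\set{1,2}]$ is exterior on $f_\emptyset,f_1,f_2,f_{1,2}$, and I would use that $e_2$ has exact height $2$, so that $u_2$ is a unit. With this, the relation $u^{\w{2}}u_2$ becomes $u^{p^2-1}=0$, the remaining relations $u^{\w{i}}u_i$ give $p=0$ and $u_1u^{p-1}=0$, and the linear relations $u_af_{I\cup b}-u_bf_{I\cup a}$ express $f_\emptyset,f_1,f_2$ through $f_{1,2}$ up to the unit $u_2$; meanwhile the quadratic product relations reduce every product of two generators to a $u$-multiple of a single $f_I$. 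Carrying this bookkeeping out leaves $f_{1,2}$ as the only surviving exterior generator, subject to $u^{\w{1}}f_{1,2}=0$ and $f_{1,2}^2=0$, over the base $\F_{p^2}[[u_1]][u]/(u_1u^{p-1},u^{p^2-1})$ together with the class $\oo{p}$. This recovers the associated graded of the claimed presentation.

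It then remains to promote this isomorphism of associated graded rings to one of rings. The multiplicativity of the KSS from \Cref{sec:mult} already realises the identification above as an algebra map, so the only thing left is to rule out hidden multiplicative extensions with respect to the K\"unneth filtration. All of the genuinely problematic extensions recorded in \Cref{subsec: extension problems} require $n>2$; in the case at hand I expect the few relevant products — the base relations, $u^{\w{1}}f_{1,2}$, $f_{1,2}^2$ and $\oo{p}^{\,2}$ — to be forced, with $u^{\w{1}}f_{1,2}=0$ of the type that provably holds already in homotopy. The delicate point is exactly this verification: one must check that no product is carried nontrivially into higher K\"unneth filtration (in particular, one must settle the value of $\oo{p}^{\,2}$). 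Once this is done the associated graded ring structure agrees with the ring structure, and the presentation in the statement follows.
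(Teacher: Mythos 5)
Your specialisation of the $E_\infty$-page is carried out correctly and agrees with the paper's own computation (\Cref{prop:koszulhomology} at $n=2$): the surviving relation on $f_{1,2}$ is $u^{\w{1}}f_{1,2}=u^{p-1}f_{1,2}=0$. But this is exactly where your final claim that ``the presentation in the statement follows'' breaks down, because the Corollary asserts the ideal contains $uf_{1,2}$, not $u^{p-1}f_{1,2}$. These coincide only when $p=2$; for odd $p$ the two presentations are different rings, since in the ring you construct the elements $uf_{1,2},\dotsc,u^{p-2}f_{1,2}$ are all nonzero. You never reconcile this, and no deferred ``delicate verification'' of extensions can do so: in a multiplicative filtration, a product that is nonzero on the $E_\infty$-page cannot become zero in the target (hidden extensions only lower filtration), so $uf_{1,2}\neq 0$ in homotopy whenever it is nonzero in $A/\af$. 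Note that everything else in the paper --- the ideal $\af$ in the main Theorem, the remark in the introduction, and the proposition in \Cref{subsec: extension problems} (which proves $u^{p^i-1}f_{i,j}=0$ in homotopy) --- supports the $u^{p-1}f_{1,2}$ version; so a correct writeup of your argument must either restrict to $p=2$, where $u^{p-1}=u$, or record that what it proves is the presentation with $u^{p-1}f_{1,2}$ and flag the discrepancy with the Corollary as printed, rather than assert agreement.

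Separately, your route differs from the paper's and carries a circularity risk. The paper does not obtain the Corollary by cancelling a tensor factor out of the main theorem: it computes the $E_2$-page $E_{\F_p}(\oo{p})\otimes A/\af$ of the $BP\<2\>$-based spectral sequence directly (\Cref{prop:koszulhomology}), notes that this spectral sequence collapses because the $BP$-based one does and the comparison map $\Tor^{BP_*}(\F_p,e_{n*})\to\Tor^{BP\<n\>_*}(\F_p,e_{n*})$ is surjective, and then settles the ring structure via \Cref{subsec: extension problems}. Your cancellation of the factor $\HH_*(BP\<2\>;\F_{p^2})$ requires the splitting lemma of \Cref{subsec: extension problems}, whose proof itself rests on the collapse of the $BP\<2\>$-based spectral sequence; so the splitting cannot be used as an input without first running the direct argument anyway. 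Your observation that the value of $\oo{p}^{\,2}$ must be settled (the statement lists no such relation, and at $p=2$ graded commutativity does not force it to vanish) is a fair point which the paper leaves unaddressed, but it is secondary to the $uf_{1,2}$ issue above.
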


This corollary does not follow directly from the above theorem, but from the techniques employed in the computation.
As $BP\<2\>$ is known to be a commutative $S$-algebra we can proceed directly to the spectral sequence based on $BP\<2\>$ and avoid $BP$ entirely.
The algebraic computation of the $\Page{2}$-page, as well as the relevant Massey products, follow the same lines as the general computation in \Cref{sec: E_2 page}.
There are no possible extensions when $n=2$ for degree reasons.
The homology of $e_2$ is then recovered by tensoring with the homology of $BP\<2\>$.

The product structure that we end up with is an interesting one.
In characteristic $p$, divided power algebras frequently arise, and the structure we have here is similar.
The element $u$, which is a unit in $\moravaE_n$ but not in $e_n$, persists to give a class in $\HH_*(e_n;\F_p)$.
Many of the products that we have can be thought of as divided products with respect to $u$.

\subsection{Outline}
In \Cref{sec:mult} we show that the KSS based on an $\Ealg_3$-algebra $R$ in $S$-modules is multiplicative when the $R$-modules in question are commutative $S$-algebras.
We first recall the $\Lf$ construction of Mandell along with some of its properties.
We also recall the relevant work of the second author from \cite{KSSpaper} for establishing that a spectral sequence is multiplicative.
We then proceed to show that the spectral sequence is multiplicative by constructing a zig-zag of filtrations
\[
\xymatrix{
\Lfb(\HFpb,\cbu{e_n},\HFpb,\cbu{e_n}) \ar@{~>}[rr]^-{m_{\bu}}&
&
\Lmb(\HFpb,\cbu{e_n}),
}
\]
where $\HFpb$ is the filtration associated to the Koszul complex, $\cbu{e_n}$ is the constant filtration, and $f=([0,\frac{1}{4}]^3,[\frac{1}{4},\frac{1}{2}]^3,[\frac{1}{2},\frac{3}{4}]^3,[\frac{3}{4},1]^3)$ as an element of $\fC_3(4)$, the $4$th space in the little $3$-cubes operad.
This is sufficient to show that the differentials in the KSS satisfy the Leibniz formula and apply \Cref{thm: Baker Richter B.2} of Baker and Richter.

In \Cref{sec: E_2 page} we compute the $\Page{2}$-page of the KSS.
We compute this $\Tor$-group with it's product structure and various Massey products.
After resolving a couple of extension problems, we are able to identify these Massey products with Toda brackets and derive the collapse of the spectral sequence.
After we have this collapse result we can deduce that the target of this KSS splits as the tensor product
\[
\HH_*(e_n;\F_p)\iso \HH_*(\BPn;\F_p)\tensor_{\F_p} \pi_*(\HF_p \wedge_{\BPn} e_n).
\]

\subsection{Conventions}
We work with the model of spectra developed in \cite{EKMM}.
These are refered to as $S$-modules and $S$-algebras.
In \Cref{sec:mult} we will use $R$ to denote an $\Ealg_n$-algebra in $S$-modules whose underlying $S$-module is cofibrant.
We will use $A$ and $B$ to denote commutative $S$-algebras that receive a map of $\Ealg_1$-algebras from $R$.
The underlying $R$-module of $A$ and $B$ will also be assumed to be cofibrant.
Following Mandell, we will take $R$-modules to mean modules over a strictly associative $S$-algebra, denoted $UR$, which is homotopy equivalent to $R$ in the category of $S$-modules.
Similarly, all model categorical notions will take place in $UR$-modules instead of $R$-modules.
We also use homotopy cofibrant $R$-module to mean a module $M$ over $UR$ which is homotopy equivalent to a cell or cofibrant $UR$-module in the sense of \cite{EKMM}.

Our computation is concerned with various spectra that arise in chromatic homotopy theory.
Specifically, we work with the Brown-Peterson spectrum $BP$, the truncated Brown-Peterson spectrum $\BPn$, and connective Morava $\moravaE$-theory $e_n$.
For more information about the spectra $BP$ and $\BPn$ we recommend the modern classic \cite{GreenBook}.
We also recommend \cite{RezkHMthm} for information on Morava $\moravaE$-theory.
It is shown in \cite{GHmoduli} that $\moravaE$-theory does in fact possess the structure of an $\Ei$-algebra, and hence a commutative $S$-algebra structure.
This induces a commutative $S$-algebra structure on $e_n$.
For a survey of chromativ homotopy theory from a modern perspective see \cite{BarthelBeaudry}.

Lastly, recall that the sign convention for the Leibniz formula in chain complexes of graded modules is 
\[
\partial(ab)=\partial(a)b+(-1)^{\vert a\vert} a\partial(b)
\]
where $\vert a \vert$ is the total degree of the homogeneous element $a$.
This is the sign convention for Leibniz formulas for all differentials in spectral sequences.

\subsection*{Acknowledgments}
This project began while both authors were funded by DFG-GRK~1916 at Universit\"at Osnabr\"uck.
We are grateful to the collegial environment that was fostered there.
We would also like to thank Andrew Baker, Tobias Barthel, Tyler Lawson, and Eric Peterson for helpful conversations during this project.

\section{The multiplicativity of the K\"unneth spectral sequence for \texorpdfstring{$\Ealg_3$}{E\_3}-algebras}\label{sec:mult}
In \cite{KSSpaper}, it is shown that the K\"unneth spectral sequence
\[
\Tor_s^{\pi_*R}(\pi_*A,\pi_*B)_t \ssra \pi_{s+t}(A\wedge_R B)
\]
is multiplicative when $R$ is a commutative $S$-algebra and $A$ and $B$ are $R$-algebras.
The symmetry
\[
\tau: M\wedge_R N \lra N\wedge_R M
\] 
of the monoidal structure on $R$-modules is explicitly used.
We do not have a symmetric monoidal structure on the category of $BP$-modules itself but on the homotopy category of $BP$-modules.
This symmetric monoidal structure on the homotopy category is induced by an interchange operation which exists on the category of $BP$-modules.
First we briefly recall the work of Mandell where he constructs a point set model for the relative smash products and various interchange operations.
We then recall some definitions and results from \cite{KSSpaper} before discussing multiplicative filtrations.
Next, we show that our spectral sequence is multiplicative.
Finally, we recall \Cref{thm: Baker Richter B.2} from \cite{BakerRichterAdams} which we will use to show that the KSS collapses in \Cref{thm: collapse}.

\subsection{Monoidal structure on the category of modules over a \texorpdfstring{$\fC_n$}{C\_n}-algebra}
\label{subsec:Mandell recollection}
In this subsection, let $R$ be an $\fC_n$-algebra in the category of $S$-modules, where $\fC_n$ is the little $n$-cubes operad.
While it is usual to assume that $R$ is an $\Ealg_n$-algebra this is only a homotopically meaningful notion.
Instead, we fix an actual $\fC_n$-algebra at the outset.
Let  $\fC_{n-1}(m)$ denote the $m^{th}$-space of the little $(n-1)$-cubes operad.
In \cite{MandellHULKsmash}, Mandell constructs point-set level models for monoidal products
\[
\Lf: (R\mathrm{-mod)}^{m} \lra R\mathrm{-mod},
\]
depending on a map $f: X \lra \fC_{n-1}(m)$, where $X$ is any $CW$-complex.
Recall that Mandell works with $UR$-modules instead of $R$-modules.
Where $R$ is only assumed to be an $\fC_1$-algebra in $S$-modules, $UR$ is a homotopy equivalent associative $S$-algebra.
In particular, this allows us to work with an honest category of modules as opposed to the more general operadic definition.
See Section 2 of \cite{MandellHULKsmash} for more details on this.
Following Mandell, by $R$-modules we will implicitly mean $UR$-modules.
Similarly, all notions of cofibrancy are in terms of the model structure on $UR$-modules.

Explicitly, given a map $f: X \lra \fC_{n-1}(m)$
Mandell constructs a $UR$-$U(R^m)$-bimodule structure on $UR\wedge X_+$.
This is then used to define 
\[
\Lf(M_1,M_2,\ldots,M_m):=(UR\wedge X_+)\wedge_{U(R^m)}(M_1\wedge M_2 \wedge \cdots \wedge M_m),
\]
where $X_+$ denotes the suspension spectrum of $X$ with a disjoint basepoint.
$\Lf$ has very nice homotopical properties.
It is natural in $f$ and its homotopy type only depends on the homotopy class of $f$.
In addition, it is homotopy invariant in the $R$-module coordinates when applied to modules which are homotopy cofibrant.
In particular, this implies that $\Lambda^f$ is functorial in zig-zags when the arguments are homotopy cofibrant.
This means that $\Lf$ applied to a zig-zag of homotopy cofibrant $R$-modules also produces a zig-zag.

We are primarily interested in the case where the base space $X$ is a point and where $f$ is the constant map $\mu_m: X \to \fC_{n-1}(m)$ whose image is 
\[ \left([0,1/m]^{n-1},[1/m, 2/m]^{n-1}, \dotsc, [(m-1) / m, 1]^{n-1}\right) \in \fC_{n-1}(m),
\]
i.e., the configuration of $m$ small cubes along the diagonal of a large cube.
Whenever $m$ is clear from the context we just write $\mu$ for $\mu_m$.
The only other cases of interest, for us, will be when $X$ is an interval and $f$ is a path in $\fC_{n-1}(m)$ connecting different configurations.

Mandell uses this construction to determine what extra structure the derived category of modules over an $\Ealg_n$-algebra has when $n\in \{2,3,4\}$.
For example, he shows that
$M \widetilde{\wedge}_R N := \Lmu(M,N)$
yields a monoidal structure on the homotopy category of $R$-modules if $R$ is an $\Ealg_2$-algebra.
When $R$ is an $\Ealg_3$ this monoidal structure has a braiding.
This braiding is a symmetry when $R$ is an $\Ealg_4$.

As we are working with actual module spectra (as oposed to working in the the homotopy category), we are going rephrase his main theorem in the following way.
As a notation, we write 
\[ \xymatrix{M \ar@{~>}[r]& N} \]
to indicate that there is a zig-zag of maps connecting the $R$-modules $M$ and $N$, where all maps going on the wrong direction are weak equivalences. 
This induces a map from $\pi_*M$ to $\pi_*N$.
Similarly, we write
\[ \xymatrix{M \ar@{<~>}[r]& N} \]
when there is a zig-zag of weak equivalences connecting $M$ and $N$.
The proof of the main theorem of \cite{MandellHULKsmash} implies the following:
\begin{thm}\label{thm:mainzigzag}
	Let $R$ be an $\fC_2$-algebra and $M_1, M_2$ and $M_3$ be homotopy cofibrant $R$-modules.
	\begin{enumerate}
		\item There are zig-zags
		\[ 
		\xymatrix{
		\Lmu(\Lmu(M_1, M_2), M_3) \ar@{<~>}[r]^-{\alpha}&
		\Lmu(M_1, M_2, M_3) \ar@{<~>}[r]^-{\alpha}&
		\Lmu(M_1, \Lmu(M_2, M_3)) 
		}.
		\] 
		\item If $R$ is an $\fC_3$-algebra, there is a zig-zag
		\[ 
		\xymatrix{\Lmu(M_1, M_2) \ar@{<~>}[r]^-{\tau}& \Lmu(M_2, M_1)}.
		\]
	\end{enumerate}
\end{thm}

\begin{rmk}\label{rem: after Mandell theorem}
	These zig-zags are described explicitly in Sections 4.5 and 4.7 of \cite{MandellHULKsmash}, respectively.
	The result then follows from the combination of his Theorem 1.5, Theorem 1.7, and various properties of the $\Lf$ construction.
	In particular, this construction is natural and homotopy invariant in the $f$ coordinate as well as the module coordinates, when applied to $R$-modules that are homotopy cofibrant.
	
	More explicitly, the coherences necessary to show \Cref{thm:mainzigzag} induces a monoidal structure can be constructed as follows.
	When $n\geq 2$, there exists a path $\alpha$ from $\mu_m$ to the composition, using the operadic structure, of $\mu_r$ with $\mu_{k_1},\ldots, \ \mu_{k_r}$ such that $\sum_{i=1}^rk_i=m$.
	The existence of such an $\alpha$ then gives a natural zig-zag of functors
		\[ 
		\xymatrix{
		\Lambda^{\mu_m}\ar[r]&
		\Lambda^{\alpha}&
		\Lambda^{\mu_r\circ(\mu_{k_1},\mu_{k_2},\ldots,\mu_{k_r})} \ar[r] \ar[l]&
		\Lambda^{\mu_r} (\Lambda^{\mu_{k_1}},\Lambda^{\mu_{k_2}},\ldots, \Lambda^{\mu_{k_r}})
		}
		\]
	from $\Lambda^{\mu_m}$ to $\Lambda^{\mu_r}\circ (\Lambda^{\mu_{k_1}},\Lambda^{\mu_{k_2}},\ldots, \Lambda^{\mu_{k_m}})$.
	These maps are all weak equivalences when the arguments are homotopy cofibrant modules.
	This is the general approach to establishing coherences in the derived category.
	Such arguments also work to establish braidings and symmetries of the monoidal structure.
\end{rmk}

We will now show how to construct algebras in such a setting.
The multiplicativity of our spectral sequence will come from lifting this structure to the filtered setting.
This process is discussed in \Cref{subsec: mult filtrations over BP}.

In ordinary ring theory, a map of associative algebras $R \lra A$ only induces an $R$-algebra structure on $A$ if the image of $R$ is contained in the center of $A$.
However, if $A$ is commutative then every element of $A$ is central.
A similar argument works here so that both $\HF_p$ and $e_n$ are algebras over $BP$.
\Cref{prop: alg structure over BP} is a version of the above using $\Lmu$.

\begin{prop}\label{prop: alg structure over BP}
	Given a commutative $S$-algebra $A$, an $\Ealg_2$-algebra $R$, and a map $R \lra A$ of $\Ealg_1$-algebras in $S$-modules, there exists a product map $\Lmu(A,A) \lra A$ in the category of $R$-modules.
	This product is associative in the sense that the diagram
	\begin{equation}\label{eq:assoc}
		\xymatrix{
		\Lmu(\Lmu(A,A),A) \ar[dd] \ar@{<~>}[rr]^{\alpha}&
		& \Lmu(A,\Lmu(A,A)) \ar[dd]\\
		&
		&
		\\
		\Lmu(A,A)\ar[r]& A& \Lmu(A,A) \ar[l]
		}
	\end{equation}
	commutes upon passing to the homotopy category.
\end{prop}
In the above diagram $\alpha$ is the zig-zag of \Cref{thm:mainzigzag}.
It can be witnessed by an explicit path $\alpha$ in $\fC_1(3)$.

\begin{proof}
	The construction $\Lmu$ is defined by the coequalizer diagram
	\[
	UR \mu \wedge U(R^2) \wedge A \wedge A \rightrightarrows UR\mu \wedge A \wedge A.
	\]
	$UR\mu$ is the $UR-U(R^2)$-bimodule $UR \wedge *_+$ where the right $U(R^2)$-module structure is induced by the inclusion $\mu=([0,\frac{1}{2}]^{n-1},[\frac{1}{2},1]^{n-1}) \in \fC_{n-1}(2)$.
	We then want to construct a map of coequalizer diagrams
	\[
	\xymatrix{
	UR\mu \wedge U(R^2) \wedge A \wedge A \ar@<-.5ex>[rr] \ar@<.5ex>[rr] \ar[dd]&
	&
	UR\mu \wedge A \wedge A \ar[dd]\\
	&
	&
	\\
	UR\mu\wedge UR \wedge A \ar@<-.5ex>[rr] \ar@<.5ex>[rr]&
	&
	UR\mu \wedge A.
	}
	\]
	We first construct the square with the top arrows in the equalizer diagrams and then the square with the bottom arrows of the diagrams.
	
	The right $U(R^2)$-module structure on $UR\mu$ is constructed using the multiplication map
	\[
	U(R^2) \lra UR
	\]
	along with $\mu$.
	Thus we have a commutative diagram
	\[
	\xymatrix{
	UR\mu \wedge U(R^2) \ar[rr] \ar[dd]&
	&
	UR\mu \ar[dd]\\
	&
	&
	\\
	UR\mu\wedge UR \ar[rr]&
	&
	UR\mu.
	}
	\]
	This constructs the square with the top arrows of the coequalizer diagrams by then smashing with the product of $A$ as a commutative $S$-algebra.
	
	The left $U(R^2)$-module structure on $A\wedge A$ is the composite of
	\[
	f:U(R^2)\wedge A \wedge A \lra UR \wedge UR \wedge A \wedge A \lra UR \wedge A \wedge UR \wedge A \lra A \wedge A \wedge A \wedge A
	\]
	and the multiplication of $A$ as a commutative $S$-algebra smashed with itself.
	The map $f$ is a composition of
	\[
	U(R\wedge R) \lra UR\wedge UR,
	\]
	which is referred to as 1.2 in \cite{MandellHULKsmash}, the natural equivalence
	\[
	UR \lra R,
	\]
	the map from $R$ to $A$, and the symmetry in the underlying category of $S$-modules.
	Therefore the diagram
	\[
	\xymatrix{
	U(R^2)\wedge A \wedge A \ar[rr] \ar[dd]&
	&
	A \wedge A \ar[dd]\\
	&
	&
	\\
	UR \wedge A\ar[rr]&
	&
	A
	}
	\]
	commutes since $A$ is a commutative and associative $S$-algebra.
	We now have a map of coequalizer diagrams which naturally induces
	\[
	\Lmu(A,A) \lra A
	\]
	by definition.
	
	To see that the associativity condition is satisfied we can make a similar argument.
	As $A$ is an associative $S$-algebra we can construct a product map
	\[
	\Lambda^{\alpha}(A,A,A)=(UR\wedge I_+)\wedge_{U(R^3)}(A\wedge A \wedge A) \lra A
	\]
	by examining coequalizer diagrams as we did above.
	The invariance and functoriality of $\Lambda^{(-)}$ induces the zig-zag of equivalences
	\[
	\Lmu(\Lmu(A,A),A) \sr{\we}\lra \Lambda^{\alpha}(A,A,A) \sr{\we}\lla \Lmu(A,\Lmu(A,A)).
	\]
	Functoriality of $\Lmu(-,-)$ also gives us the maps
	\[
	\Lmu(\Lmu(A,A),A) \lra \Lmu(A,A)
	\]
	and
	\[
	\Lmu(A,\Lmu(A,A)) \lra \Lmu(A,A).
	\]
	The desired diagram commutes due to the discussion in the beginning of Section 4.5 of \cite{MandellHULKsmash} and the associativity of $A$.
\end{proof}

\subsection{Filtrations and a comparison theorem}
\label{subsec:kss recollection}
The material in this section is a brief recollection of necessary material from \cite{KSSpaper} and \cite{TilsonC2AdamsSS}.
Here we give definitions that are adapted to our situation from the standard notions.
These definitions do recover the standard constructions when $\Lf$ is replaced by $-\wedge_R-$ in the situation that $R$ is a commutative $S$-algebra.

\begin{defn}
	A \Defn{filtered spectrum} or \Defn{filtration} is a sequence of cofibrations
	\[
	\cdots \hra A_{i-1} \hra A_i \hra A_{i+1} \hra \cdots.
	\]
	We denote the single filtered object as $\Ab$.
	The \Defn{associated graded complex} of a filtered spectrum $\Ab$ is the complex of spectra
	\[
	\cdots \cla A_{i-1}/A_{i-2} \cla A_i/A_{i-1} \cla A_{i+1}/A_i \cla \cdots
	\]
	denoted by $E^0(\Ab)$.
	
\end{defn}
Our notation $A \cra B$ is an abbreviation for $A \lra \sus B$.
Here we work with increasing filtrations exclusively.
The K\"unneth filtration, which gives rise to the KSS, is such an increasing filtration.
\begin{defn}
	The \Defn{smash product of two filtrations} $\Ab$ and $\Bb$ is denoted by $\Lmb(\Ab,\Bb)$.
	The $n$th term in the filtration is 
	\[
	\Lmu_n(\Ab,\Bb):=\colim_{i+j\leq n} \Lmu(A_i,B_j).
	\]
	More generally, the iterated smash product of $r$ filtrations $\Ab^1,\Ab^2,\ldots,\Ab^r$ 
	is denoted by $\Lmb(\Ab^1,\Ab^2,\ldots,\Ab^r)$ and defined as
	\[
	\Lmu_n(\Ab^1,\Ab^2,\ldots,\Ab^r):=\colim_{i_1+\dotsb +i_r\leq n} \Lmu(A^1_{i_1}, A^2_{i_2}, \ldots,  A^r_{i_r})
	\]
\end{defn}

When $R$ is a commutative $S$-algebra, this definition is equivalent to what is given in \cite{KSSpaper}.

\begin{lemma}
	The smash product $\Lmb(\Ab^1,\Ab^2,\ldots,\Ab^r)$ of filtrations $\Ab^1,\Ab^2,\ldots,\Ab^r$ is a filtration as well. 
\end{lemma}
\begin{proof}
	Let $\Bb := \Lmb(\Ab^1,\Ab^2,\ldots,\Ab^r)$.
	We only need to show that the maps $B_i \to B_{i+1}$ are cofibrations.
	This follows from Proposition 3.10 of \cite{EKMM} and  the repeated application of the pushout product axiom.
\end{proof}

We call a filtration \Defn{projective} if the associated graded complex consists of retracts of free $R$-modules.
A filtration being \Defn{exact} is essentially equivalent to the associated graded complex being exact after applying $\pi_*$.
A filtration is \Defn{exhaustive} when $\colim \Ab\we A$.
Every exact filtration is also exhaustive by Lemma 2.2 of \cite{KSSpaper}.
For more details see Section 2.1 of \cite{KSSpaper}.
This exactness and projectivity are necessary in order to apply the following result.

\begin{thm}[{\cite[Thm 1]{KSSpaper}}]
\label{thm: Comparison Theorem}
	Suppose that we have a map $f: Y \to A$ of $R$-modules, an exact filtration $A_{\bullet} \subset A$, and a projective and exhaustive filtration $Y_{\bullet} \subset Y$.
	Assume further that $A_i = *$ and $Y_i = *$ for $i \leq -1$.
	Then there is a map of filtrations 
	\[
		\Yb \sr{f_{\bu}} \lra \Ab
	\]
	such that $\colim f_{\bu}\we f$ under the equivalences $\colim \Yb \we Y$ and $\colim \Ab \we A$.
	Furthermore, the lift $f_{\bu}$ of $f$ is unique up to homotopy of filtered modules.
\end{thm}

This result will be applied to show that the K\"unneth spectral sequence is multiplicative.
Explicit details of the construction of the K\"unneth spectral sequence can be found in Chapter 4.5 \cite{EKMM} and Sections 2 and 4 of \cite{KSSpaper}.
We will apply \Cref{thm: Comparison Theorem} in the setting of $UR$-modules with $\Lmu$.
This suffices as exactness and projectivity are homotopical notions which are preserved by $\Lmu$.

\subsection{Multiplicative filtrations and the K\"unneth spectral sequence} \label{subsec: mult filtrations over BP}

We now give a definition of multiplicative filtration.
Such filtrations will give rise to multiplicative spectral sequences in the same way that pairings of filtrations give rise to pairings of spectral sequences.
As we are not working with a commutative $S$-algebra, we will only be able to obtain a zig-zag of filtrations as opposed to a map of filtrations.
This will be sufficient though as we will be applying homotopy invariant functors.
The same is true of the coherences, such as ``associativity''.

\begin{defn}
	\label{defn: mult filt}
	We say that a filtration $\Ab$ is \Defn{multiplicative} if there is a zig-zag of maps of filtrations
	\[
	\xymatrix{
	\Lmb(\Ab,\Ab) \ar@{~>}[rr]^-{m_{\bu}}&
	&
	\Ab,
	}
	\]
	where maps of filtrations going in the wrong direction are level-wise weak equivalences.
	Further, this structure should be associative in the sense that the following diagram
	\[
		\xymatrix{
		\Lmb(\Lmb(\Ab,\Ab),\Ab) \ar@{~>}[dd] \ar@{<~>}[rr]^{\alpha}&&
		\Lmb(\Ab,\Lmb(\Ab,\Ab)) \ar@{~>}[dd]\\
		&&\\
		\Lmb(\Ab,\Ab)\ar@{~>}[r]& \Ab& \Lmb(\Ab,\Ab) \ar@{~>}[l]
		}
	\]
	commutes after passing to the homotopy category.
\end{defn}
Here, as in \eqref{eq:assoc}, $\alpha$ denotes a zig-zag of \Cref{thm:mainzigzag}.

As we will be applying a general lifting result to algebras in $BP$-modules, we can not expect for a better form of associativity in light of \Cref{prop: alg structure over BP}.
However, we will establish that the KSS applied to commutative $S$-algebras over $BP$ is multiplicative.
A useful consequence of a filtration being multiplicative is given in the following proposition:

\begin{prop}\label{prop: still mult ss}
	If $\Ab$ is a multiplicative filtration of $R$-modules, then each page of the corresponding spectral sequence is a DGA (not necessarily commutative).
	In particular, the differentials satisfy the Leibniz rule with respect to this product.
\end{prop}
\begin{proof}[Proof sketch]
	That a multiplicative filtration gives rise to a product on $\Page{r}(\Ab)$ for each $r$ follows the usual line of argument.
	That the product on the filtration is a zig-zag will not be an issue as the algebraic computation is obtained from applying homotopy invariant functors.
	The associativity of this product structure then also follows by the same argument.
	The difficult part of the above is showing that the differentials satisfy the Leibniz rule.
	The standard argument for showing this starts by representing $y,d_r(y)\in \Page{r}(\Ab)_{s,t}$ as a map from the filtered spectrum
	\[
	\Ub(r,s,n):=\cdots \to \ast \to S^{n-1} \to \cdots \sr{1}\to S^{n-1} \to D^n\sr{1}\to D^n \to \cdots
	\]
	to $\Ab$.
	The total degree of $y$ is $s+t=n$ and it is represented as a map from 
	\[
	D^n_R/S^{n-1}_R \lra A_s/A_{s-1}
	\]
	and $d_r(y)$ is represented as a map from
	\[
	S^{n-1}_R/\ast \lra A_{s-r}/A_{s-r-1}.
	\]
	We then represent the pair $(y,d_r(y))$ as a map of filtered spectra
	\[
	\wt{y}:\Ub(r,s,n) \lra \Ab.
	\]
	The spectral sequence associated to $\Ub(r,s,n)$ has two generators and one non-trivial differential between them, the obvious $d_r$.
	Given two such maps we can smash them together to form
	\[
	\wt{y}\wedge \wt{y}': \Lmb(\Ub,\Ub') \lra \Lmb(\Ab,\Ab).
	\]
	As having a map of filtrations induces a map of spectral sequences, any differential in the spectral sequence associated to $\Lmb(\Ub,\Ub')$ induces a differential in the spectral sequence associated to $\Lmb(\Ab,\Ab)$.
	The spectral sequence associated to $\Lmb(\Ub,\Ub')$ is easily computed and seen to have  $4$ generators on the $\Page{1}$-page.
	They are $y\tensor y', y\tensor d_r(y'), d_r(y)\tensor y',$ and $d_r(y)\tensor d_r(y')$.
	In this spectral sequence we have The differential $d_r(y\tensor y')=d_r(y)\tensor y'+(-1)^{s+t}y\tensor d_r(y')$ coming from the attaching map of the top cell of $S^n\wedge S^{n'}$.
	This would naturally induce differentials in the spectral sequence for $\Ab$ using a natural map 
	\[
	\Lmb(\Ab,\Ab) \lra \Ab.
	\]
	However, what we have is a zig-zag.
	This is enough though as the whole computation takes place after applying $\pi_*(-)$ and we obtain differentials in the spectral sequence associated to $\Ab$ as the composite
	\[
	\pi_*(\Ub'')\sr{f}\lra \pi_*(\Lmb(\Ub,\Ub')) \lra \pi_*(\Lmb(\Ab,\Ab))\lra \pi_*(\Ab).
	\]
	Here the map $f$ represents the differential on $y\tensor y'$ in the spectral sequence associated to $\Lmb(\Ub,\Ub')$.
	Thus we have a map of filtered abelian groups 
	\[
	\pi_*(\Ub'') \lra \pi(\Ab)
	\]
	which implies that the differentials in the spectral sequence associated to $\Ab$ satisfy the Leibniz formula.
\end{proof}
See Section 4 of \cite{TilsonC2AdamsSS} for a more detailed analysis of this.

The following lemma will be applied to the structure maps produced by \Cref{prop: alg structure over BP}.
\begin{lemma}
	\label{lemma: lift of product}
	Given a map of $R$-modules
	\[
	\Lmu(M,M) \lra M
	\]
	which is associative in the sense of \Cref{eq:assoc} there is a lift of this product structure to a map of filtrations
	\[
	\Lmb(\Mb,\Mb) \lra \Mb
	\]
	making $\Mb$ multiplicative where $\Mb$ is a projective and exact filtration of $M$ in $R$-modules.
\end{lemma}
\begin{proof}
	Given the map $\Lmu(M, M) \to M$ we wish to lift it to the filtration $\Mb$ of $M$ in $R$-modules.
	We do this using \Cref{thm: Comparison Theorem} applied to the filtration $\Lmb(\Mb,\Mb)$ and the map $\Lmu(M,M) \to M$.
	
	Since $\Mb$ is a projective filtration we also have that $\Lmb(\Mb,\Mb)$ is level-wise projective.
	This follows as when the filtrations $\Mb$ and $\Nb$ are projective we have that
	\[
	E^0(\Lmb(\Mb,\Nb))_n\we (E^0(\Mb)\tensor E^0(\Nb))_n:=\bigvee_{i+j=n}\Lmu(M_i/M_{i-1}, N_j/N_{j-1}) 
	\]
	where the tensor is the graded tensor product of complexes of spectra induced by $\Lmu$.
	Therefore we can apply \Cref{thm: Comparison Theorem} to the map $\Lmu(M,M) \to M$.
	
	When $f$ is a map from a contractible space, the construction $\Lfb$ applied to $\Mb$ in each coordinate is still a projective filtration.
	Thus we apply \Cref{thm: Comparison Theorem} to each map in the associativity diagram for $M$.
	The uniqueness, up to filtered homotopy, of our lift then ensures that the diagram commutes in the homotopy category.
	This shows that this lift is suitably associative.
\end{proof}
Such filtrations are obviously multiplicative in our sense.

\begin{prop}
\label{prop: lambda filt mult}
	Let $R$ be an $\Ealg_3$-algebra and let $\Mb$ and $\Nb$ be multiplicative filtrations of $R$-modules.
	Then the filtration $\Lmb(\Mb,\Nb)$ is a multiplicative filtration of $R$-modules.
\end{prop}
We will apply this result when $\Mb$ is a filtration of $\HF_p$ or $\HF_p\wedge BP$ by $BP$-modules and $\Nb$ is a filtration of $e_n$ by $BP$-modules.
These are the two cases where $\Lmb(\Mb,\Nb)$ have their associated spectral sequence converging to $\pi_*(\HF_p\wedge_{BP} e_n)$ or $\HH_*(e_n;\F_p)$, respectively.
In these situations, $\Mb$ is the projective and exact filtration of $\HF_p$ or $\HF_p \wedge BP$ that comes from the Koszul complex and $\Nb$ is the constant filtration of $e_n$.

\begin{proof}
	The multiplicative structure on the filtration $\Lmb(\Mb,\Nb)$ is given by the composition of zig-zags in the following diagram:
	\[
	\xymatrix{
		\Lmb(\Lmb(\Mb,\Nb),\Lmb(\Mb,\Nb)) \ar@{~>}[d]&
		&
		&
		&
		\Lmb(\Mb,\Nb)\\
		\Lmb(\Mb,\Nb,\Mb,\Nb) \ar@{~>}[rr]&
		&
		\Lmb(\Mb,\Mb,\Nb,\Nb) \ar@{~>}[rr]&
		&
		\Lmb(\Lmb(\Mb,\Mb),\Lmb(\Nb,\Nb)) \ar@{~>}[u]
	}
	\]
	Here, we repeatedly apply \Cref{thm:mainzigzag} as well as \Cref{rem: after Mandell theorem}.
	The last map is induced by the structure maps 
	\[
		\xymatrix{\Lmb(\Mb,\Mb)\ar@{~>}[r] & \Mb} \text{ and } \xymatrix{\Lmb(\Nb,\Nb)\ar@{~>}[r] & \Nb}.
	\]	
	
	That this satisfies the required associativity follows from lifting the arguments of \Cref{prop: alg structure over BP} to the filtered setting.
\end{proof}

\begin{thm}
	\label{thm: mult of KSS of E_3 alg}
	Let $R$ be an $\Ealg_3$-algebra and $A$ and $B$ be commutative $S$-algebras over $R$ which are cofibrant as $R$-modules.
	Then the K\"unneth spectral sequence
	\[
	\Tor^{R_*}_s(A_*,B_*)_t \ssra \pi_{s+t}(\Lmu(A,B))
	\]
	is multiplicative.
\end{thm}
Recall that under these hypotheses $\Lmu(A,B)$ is a model for the derived relative smash product in the homotopy category of $R$-modules.

\begin{proof}
	We apply \Cref{prop: alg structure over BP} to both $A$ and $B$ obtaining the maps
	\[
	\Lmu(A,A) \lra A \ \text{and} \ \Lmu(B,B)\lra B
	\]
	satisfying the appropriate associativity condition.
	Then we lift these structures to maps of filtrations using \Cref{lemma: lift of product}.
	This produces maps of filtrations
	\[
	\Lmb(\Ab,\Ab) \lra \Ab
	\]
	where $\Ab$ is projective and exact filtrations of $A$ as an $R$-module.
	These exist by the construction given in Section 2.2 of \cite{KSSpaper}.
	The filtration $\cbu{B}$ is also multiplicative.
	Then using \Cref{prop: lambda filt mult} we have that $\Lmb(\Ab,\cbu{B})$ is multiplicative.
	Thus the spectral sequence is multiplicative by \Cref{prop: still mult ss}.
\end{proof}

\subsection{The K\"unneth spectral sequence over \texorpdfstring{$BP$}{BP}.}

In order to apply the above results, we need maps of associative $S$-algebras
\[
BP \lra A.
\]
When $A$ is $\HF_p$ this is clear as we can take the composition of $0$th Postnikov sections and the reduction mod $p$ map
\[
BP \lra P_0(BP)=\HH\Z_{(p)} \lra \HF_p
\]
which are both maps of associative algebras.
Obtaining the map to connective Morava $\moravaE$-theory is more difficult and relies on the work of Lazarev in \cite{LazarevTowersMU}, Angeltveit in \cite{AngeltveitObstruction}, and Rognes in \cite{RognesGalois}.
Once we have a map of associative $S$-algebras
\[
BP \lra \moravaE_n
\]
we get a map to $e_n$ since $BP$ is connective.
The following result is not original and is proved by stitching together various results in the literature.
We don't know of a proper reference and so we provide the argument here.

\begin{lemma}
	\label{lemma:map to morava}
	There is a map of associative $S$-algebras
	\[
	BP \lra e_n
	\]
	which takes the class $v_i$ to $u_iu^{p^i-1}$ for $i<n$, $u^{p^n-1}$ for $i=n$, and $0$ otherwise.
\end{lemma}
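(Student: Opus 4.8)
The plan is to factor the construction into three steps: first produce a map of associative $S$-algebras
\[
BP \lra E_n
\]
into periodic Morava $E$-theory, then lift it across the connective cover $e_n \lra E_n$, and finally read off the effect on homotopy groups. As the surrounding discussion indicates, the first step is the substantive one and the remaining two are formal.

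For the first step I would work relative to $MU_{(p)}$. Recall that $E_n$ is an $\Ai$- (indeed $\Ei$-) ring spectrum by Goerss--Hopkins--Miller, and that its canonical complex orientation equips it with the structure of an $MU_{(p)}$-algebra. On the other side, $BP$ may be built as an $MU_{(p)}$-algebra by the tower of \cite{LazarevTowersMU}: one kills the polynomial generators of $\pi_* MU_{(p)}$ lying outside the degrees $2(p^i-1)$ by a sequence of $MU_{(p)}$-module quotients, each carrying an $\Ai$ $MU_{(p)}$-algebra structure. Since the formal group law of $E_n$ is a universal deformation of the height-$n$ Honda formal group, it classifies a homotopy ring map $\pi_* BP \lra \pi_* E_n$, and I would promote this to a map of $\Ai$ $MU_{(p)}$-algebras using the obstruction theory of \cite{AngeltveitObstruction}. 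The successive obstructions to extending a map over the cells of the Lazarev tower, and to rigidifying the given homotopy ring map, lie in topological derivation (equivalently, topological Hochschild) cohomology groups with coefficients in a shift of $\pi_* E_n$; because $\pi_* BP$ is a regular quotient of $\pi_* MU_{(p)}$ and $\pi_* E_n$ is concentrated in even degrees, the obstruction classes sit in odd total degrees where these groups vanish. Forgetting the $MU_{(p)}$-algebra structure then yields the desired map of associative $S$-algebras. I expect this rigidification --- establishing the vanishing of the relevant obstruction groups --- to be the main obstacle, and it is precisely the point at which the cited work of Lazarev, Angeltveit, and Rognes (see \cite{RognesGalois}) enters.

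The second step is formal. The spectrum $BP$ is connective, while $e_n = \tau_{\ge 0} E_n$ is its connective cover: an associative $S$-algebra whose structure map $e_n \lra E_n$ is an isomorphism on $\pi_i$ for $i \geq 0$. Since the connective cover of an associative $S$-algebra is again an associative $S$-algebra and the connective cover functor is a coreflection, any map of associative $S$-algebras out of a connective source factors uniquely, up to contractible choice, through it. Applying this to $BP \lra E_n$ produces the required map $BP \lra e_n$.

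It remains to compute the effect on homotopy. The composite $\pi_* BP \lra \pi_* e_n \lra \pi_* E_n$ is the classifying map of the universal deformation, and with the $p$-typical generators $v_i$ fixed in our conventions it sends $v_i \mapsto u_i u^{p^i - 1}$. Under the height-$n$ normalization $u_n = 1$ this reads $v_n \mapsto u^{p^n - 1}$, and since the deformation carries no parameters $u_i$ for $i > n$ (equivalently $u_i = 0$ there) we obtain $v_i \mapsto 0$ for $i > n$. Each of these classes lies in non-negative degree, so the isomorphism $\pi_{\geq 0} e_n \iso \pi_{\geq 0} E_n$ shows they are already realized in $\pi_* e_n$, giving the stated formula.
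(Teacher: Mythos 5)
Your argument is correct in outline, but it reaches $E_n$ by a genuinely different route than the paper. The paper never maps $BP$ into $E_n$ directly: it composes the Lazarev--Angeltveit map $BP \lra E(n)$ into \emph{Johnson--Wilson} theory with the completion $E(n) \lra \widehat{E(n)}$, and then invokes Rognes's identification $\widehat{E(n)} \we E_n^{hK}$ (Proposition 5.4.9 of \cite{RognesGalois}, a faithful Galois extension) to supply the last map $\widehat{E(n)} \lra E_n$ as a map of commutative $S$-algebras; it is Rognes's computation that pins down the formula $v_i \mapsto u_i u^{p^i-1}$ on homotopy. You instead run the $\Ai$-obstruction theory of \cite{AngeltveitObstruction} against the Lazarev tower with target $E_n$ itself. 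That is viable, but note where the difficulty migrates: your opening claim that ``the canonical complex orientation equips $E_n$ with the structure of an $MU_{(p)}$-algebra'' is not automatic, since an orientation is only a homotopy ring map, and promoting it to an $\Ai$ (or $\Ei$) $MU_{(p)}$-algebra structure whose underlying $p$-typical coordinate realizes $v_i \mapsto u_i u^{p^i-1}$ is itself a theorem --- Lazarev's generalized Hopkins--Miller theorem in \cite{LazarevTowersMU}, or Goerss--Hopkins obstruction theory. That input plays exactly the role that Rognes's Galois theory plays in the paper: it is what accounts for the adjunction of the Witt vectors $\W(\F_{p^n})$, which no sequence of regular quotients, localizations, and completions of $MU_{(p)}$ can produce on its own (you do flag this as the main obstacle, which is the right instinct). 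Granting that input, your parity argument for the vanishing of the obstruction groups is the standard one and works, since $\pi_*E_n$ is even and $BP$ is a regular quotient of $MU_{(p)}$. Your connective-cover step is essentially the paper's argument in different clothing: the paper proves the lifting property by hand, realizing $e_n$ as the fiber of an $S$-algebra map $E_n \lra \tau_{<0}E_n$ and observing that the composite $BP \lra E_n \lra \tau_{<0}E_n$ is null because $BP$ is connective, whereas you appeal to the universal (coreflection) property of the connective cover; the content is the same, and your packaging is the cleaner of the two.
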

\begin{proof}
	There are maps of associative $S$-algebras
	\[
	BP \stackrel{f_1}{\lra} E(n) \stackrel{f_2}{\lra} \widehat{E(n)} \stackrel{f_3}{\we} \moravaE_n^{hK} \stackrel{f_4}{\lra} \moravaE_n,
	\]
	where
	\begin{itemize}
		\item $E(n)$ denotes periodic Johnson-Wilson theory and $f_1$ is the map of associative $S$-algebras constructed by Lazarev in \cite{LazarevTowersMU} and Angeltveit in \cite{AngeltveitObstruction},
		\item $\widehat{E(n)}$ is the $I$-adic completion and $f_2$ is the natural map, which is a map of associative $S$-algebras.
		\item The map $f_3$ is constructed by Rognes in his proof of \cite[Proposition 5.4.9]{RognesGalois}. Here, $K$ is a particular subgroup of the extended Morava stabilizer group.
		\item The map $f_4$ is also constructed by Rognes in \cite{RognesGalois}. It follows from his computation that these are maps of commutative $S$-algebras and that the composite $f_4 \circ f_3 \circ f_2 \circ f_1$ takes the class $v_i$ to the desired element in $\pi_*(\moravaE_n)$.
	\end{itemize}
	Composing these maps yields a map $BP \lra \moravaE_n$ of associative $S$-algebras.
	It is lifted to the connective cover $e_n$ as follows.
	We can construct $e_n$ as an associative $S$-algebra by attaching cells to $\moravaE_n$ to kill all of the nonnegative homotopy groups in the category of associative $S$-algebras.
	This gives a map
	\[
	\moravaE_n \lra \tau_{<0} \moravaE_n
	\]
	of associative $S$-algebras which is an isomorphism in $\pi_i$ for $i<0$.
	The fiber of this is then connective and an associative $S$-algebra as limits in $S$-algebras are computed in the underlying category of $S$-modules.
	The spectrum $BP$ is connective, therefore the composition
	\[
	BP \lra \moravaE_n \lra \tau_{<0} \moravaE_n
	\]
	is nullhomotopic.
	Thus we have our desired lift $BP \lra e_n$ in the category of $S$-algebras.
\end{proof}

We are now in a situation to apply the above result \Cref{thm: Comparison Theorem} to the maps of $S$-algebras
\[
BP \lra \HF_p,\ \ BP \lra \HF_p\wedge BP, \ \ \text{and} \ \ BP \lra e_n.
\]

\begin{cor}
	The K\"unneth spectral sequences 
	\[
	\Tor^{BP_*}_s (\HH_*(BP;\F_p),e_{n*})_t \ssra \HH_{s+t}(e_n;\F_p) \ \text{and} \ \Tor^{BP_*}_s (\F_p,e_{n*})_t \ssra \pi_{s+t}(\HF_p\wedge_{BP} e_n)
	\]
	are multiplicative.
\end{cor}
\begin{proof}
	We apply \Cref{thm: mult of KSS of E_3 alg} to $\HF_p, \HF_p \wedge BP,$ and $e_n$.
	Note that the application of \Cref{prop: alg structure over BP} to $\HF_p\wedge BP$ is slightly subtle.
	We obtain the map
	\[
	\Lmu(\HF_p\wedge BP, \HF_p \wedge BP) \lra \HF_p \wedge BP
	\]
	satisfying the appropriate coherences by noting that there is always a natural map
	\[
	\Lf(\HF_p\wedge BP,\ldots, \HF_p \wedge BP) \lra \Lf(\HF_p,\ldots, \HF_p )\wedge \Lf(BP,\ldots,BP)
	\]
	using the diagonal map on the domain of $f$ and the fact that $\Lf(BP,\ldots,BP)\we BP\wedge X_+$ for all $f$.
	The rest of the proof of \Cref{thm: mult of KSS of E_3 alg} applies directly.
\end{proof}

\subsection{Massey Products in the K\"unneth spectral sequence}
\label{subsec: massey}
Here we review some material from \cite{BakerRichterAdams}.
The results of this section are due to Baker, Richter, and Kochman.
Our only contribution is the observation that they apply in our setting.
Kochman establishes conditions under which elements of Massey products detect elements of Toda brackets in the Adams spectral sequence in \cite{KochmanBook}.
Then in \cite{BakerRichterAdams}, Baker and Richter adapt this to multiplicative K\"unneth spectral sequences.
This then relates Massey products in $\Tor$ to Toda brackets in the target.
We recall briefly some of the relevant definitions and then we will state their Theorem B.2 from \cite{BakerRichterAdams}.

Recall the convention that $\h{a}=(-1)^{\vert a\vert +1}$ where $\vert a\vert$ is the total degree of the homology class $a$, see Appendix 1 of \cite{GreenBook} where the notation $\oo{a}$ is used instead.
We also use the notation $[a]$ to denote the homology or homotopy class of a cycle $a\in A$ or a map
\[
a:S^n \lra E
\]
depending on the context.

\begin{defn}
	Let $[x],[y],[z]\in \HH_*(A)$ where $A$ is a DGA and $[x][y]=0$, $[y][z]=0$ in $\HH_*(A)$.
	Then the \Defn{Massey product} $\<[x],[y],[z]\>$ is defined to be the set $ \{\h{s}z+\h{x}t\vert \partial(s)=\h{x}y$ and $\partial(t)=\h{y}z\}$.
\end{defn}

The data $\{s,t,x,y,z\}$ along with their boundaries is called a defining system.
The indeterminacy of this Massey product is given by
\[
[x]\HH_* (A)\dsum  \HH_* (A)[z]
\]
for suitable degrees.

For homotopy groups of ring spectra there is the similar notion of Toda brackets.
In the presence of a product structure the definitions are very similar.
Therefore it is not surprising that they are related.

\begin{defn}
	Let $[a],[b],[c]\in \pi_*E$ where $E$ is an $S$-algebra and $[a][b]=0$, $[b][c]=0$ in $\pi_*E$.
	Then the \Defn{Toda bracket} $\<[a],[b],[c]\>$ is defined to be the set of all elements of the form $g_{ab}c+ag_{bc}$, where 
	\[
	g_{ij}: D^{\vert i \vert +\vert j \vert +1} \lra E
	\]
	is a nullhomotopy of the product 
	\[
	i\wedge j: S^{\vert i\vert + \vert j \vert}\iso S^{\vert i \vert} \wedge S^{\vert j \vert} \lra E \wedge E \sr{\mu} \lra E.
	\]
\end{defn} 
Note that we only use the existence of a product on $E$.
This works in modules over any $S$-algebra, such as $BP$.

We now have the result of Baker and Richter which allows us to compute differentials by relating Massey products and Toda brackets.

\begin{thm}[Theorem B.2 of \cite{BakerRichterAdams}]
	\label{thm: Baker Richter B.2}
	Assume that the following conditions hold in the KSS
	\[
	\Tor^{R_*}(A_*,B_*)\ssra \pi_*(A\wedge_R B).
	\]
	\begin{itemize}
		\item The elements $x,y,z\in \Page{r}$ are permanent cycles which converge to elements $\xi_1, \xi_2, \xi_3$ in $\pi_*(A\wedge_R B)$ respectively.
		\item The Massey product $\<[x],[y],[z]\>$ is defined in $\Page{r+1}$.
		\item The Toda bracket $\<\xi_1,\xi_2,\xi_3\>$ is defined in $\pi_*(A\wedge_R B)$.
		\item If $\{s,t,x,y,z\}$ is a defining system for $\<[x],[y],[z]\>$ then there are no non-zero crossing differentials for the differentials $d_r(s)=\h{x}y$ and $d_r{t}=\h{y}z$.
	\end{itemize}
	Then $\<[x],[y],[z]\>$ is a set of permanent cycles which converge to elements of $\<\xi_1,\xi_2,\xi_3\>$.
\end{thm}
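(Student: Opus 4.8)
The plan is to run the standard comparison of Massey products and Toda brackets, in the form due to Kochman and adapted by Baker and Richter, carried out inside the filtered $UR$-module setting of \Cref{subsec:kss recollection}. The engine of the whole argument is the multiplicativity we have just established: because the pairing $\Lmu(\Ab,\Ab)\to\Ab$ filters the product on the target, the differentials $d_r$ obey the Leibniz formula, and, more importantly, a nullhomotopy of a product witnessed on the $E_r$-page can be promoted to a genuine nullhomotopy of a product of maps of filtered spectra. With this in hand the Baker--Richter proof goes through essentially verbatim, since it uses nothing about the spectral sequence beyond this multiplicative structure.

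First I would represent the permanent cycles $[x],[y],[z]$ by maps of filtered spectra built from the test objects $U(r,s,n)$ recalled in \Cref{subsec:kss recollection}. Because these classes are permanent cycles converging to $\xi_1,\xi_2,\xi_3$, their filtered representatives extend all the way up the filtration, so that passing to the colimit recovers maps realizing the $\xi_i$ in $\pi_*(A\wedge_R B)$. Next I would realize the defining system $\{s,t,x,y,z\}$ geometrically. Saying that $\<[x],[y],[z]\>$ is defined in $E_{r+1}$ means exactly that $[x][y]=0$ and $[y][z]=0$ on the $E_{r+1}$-page, so there are classes $s,t$ with $d_r(s)=\h{x}y$ and $d_r(t)=\h{y}z$. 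Smashing the filtered representatives and pushing forward along $\mu_{\bu}$, the Leibniz formula identifies the product $\wt{x}\wedge\wt{y}$ with a class detected by $d_r(s)$; and since the Toda bracket $\<\xi_1,\xi_2,\xi_3\>$ is assumed to be defined, the products $\xi_1\xi_2$ and $\xi_2\xi_3$ are genuinely null in the target, so I may choose nullhomotopies $g_{xy},g_{yz}$ whose leading terms in the associated graded are $s$ and $t$.

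Finally I would compare the two halves $g_{xy}\xi_3+\xi_1 g_{yz}$ of the Toda bracket, through the filtration, with the Massey product representative $\h{s}z+\h{x}t$. The decisive hypothesis is the absence of crossing differentials for $d_r(s)=\h{x}y$ and $d_r(t)=\h{y}z$: it forces the filtration jumps of the two nullhomotopy terms to be exactly $r$, so that the leading term of $g_{xy}\xi_3+\xi_1 g_{yz}$ in the associated graded is precisely $\h{s}z+\h{x}t$, with no higher differential lowering the detected filtration or enlarging the indeterminacy. I expect this bookkeeping --- matching the algebraically chosen defining system with its geometric realization and ruling out any shorter intervening differential --- to be the main obstacle. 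Once the matching holds, the Massey product class is an honest $d_r$-cycle with no room for a later nonzero differential, hence a permanent cycle, and it converges to the corresponding element of $\<\xi_1,\xi_2,\xi_3\>$. Since everything is computed after applying the homotopy-invariant functor $\pi_*$, the fact that our multiplicative structure is only coherent up to the natural backwards weak equivalences of \Cref{prop: alg structure under BP} causes no trouble, exactly as remarked there.
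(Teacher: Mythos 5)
You should first be aware that the paper does not prove this statement at all: it is quoted verbatim as Theorem~B.2 of \cite{BakerRichterAdams} (itself an adaptation of Kochman's work in \cite{KochmanBook}), and the paper's only contribution, stated explicitly in \Cref{subsec: massey}, is the observation that the Baker--Richter proof applies in the $E_4$/$\Lmu$ setting because ``the only part of multiplicativity used in the proof is the pairing of the filtration with itself.'' So there is no in-paper proof to compare against; the comparison is really between your sketch and the cited argument.

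Measured against that argument, your proposal follows the same strategy --- represent the permanent cycles by maps out of the filtered test objects $U(r,s,n)$, realize the defining system $\{s,t,x,y,z\}$ geometrically, and use the no-crossing-differentials hypothesis to match the leading terms of the chosen nullhomotopies $g_{xy}, g_{yz}$ with $s$ and $t$ --- and you correctly isolate the two points that matter for this paper: that only the self-pairing $\Lmu(\Ab,\Ab)\to\Ab$ is used (so commutativity of $R$ is not needed), and that the backwards weak equivalences in the coherence diagrams disappear after applying $\pi_*$. However, as a proof your text is a plan rather than an argument: the step you yourself flag as ``the main obstacle'' --- promoting the algebraic classes $s,t$ to filtered nullhomotopies whose associated-graded leading terms are exactly $s$ and $t$, with the crossing-differential hypothesis controlling the filtration jump --- is precisely the technical content of Kochman's theorem, and you assert it rather than prove it. Likewise, the final claim that the Massey product class is a permanent cycle does not follow merely from being a $d_r$-cycle ``with no room for a later differential''; in the cited proof it follows because the geometric realization exhibits it as detecting an element of the Toda bracket, i.e.\ an actual homotopy class. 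So your sketch is a faithful reconstruction of how the cited proof goes, and of why it transports to this setting, but it does not constitute an independent proof of the theorem.
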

Even though Baker and Richter require commutativity, it is not necessary.
They only use commutativity of the base to establish that the KSS is multiplicative.
Given our \Cref{thm: mult of KSS of E_3 alg}, all that is necessary is an $\Ealg_3$-algebra.
Further, the only part of multiplicativity used in the proof is the pairing of the filtration with itself.

The notion of crossing differential is slightly technical.
Let $y\in \Page{r}_{a,n-a}$ with $d_r y \neq 0$.
We say that $y$ has a \Defn{crossing differential} if there is an element $y' \in \Page{r'}_{a',n-a'}$ with $d_{r'} y' \neq 0$ such that $a < a'$ and $a + r > a' + r'$.
If we draw a spectral sequence we see that these differentials cross each other.
In our situation, there will be no non-zero crossing differentials for degree reasons.
This result will allow us to show that certain classes contained in Massey products on the $E_2$-page survive the spectral sequence to detect genuine homotopy classes.

\section{The K\"unneth spectral sequence for \texorpdfstring{$\HH_*(e_n;\F_p)$}{H\_*(e\_n;F\_p)}}
\label{sec: E_2 page}

In this section we compute the $\Page{2}$-page of the K\"unneth spectral sequence
\[
\Page{2}_{s,t} = \Tor^{BP_*}_s(\HH_*(BP;\F_p),e_{n*})_t \ssra \HH_{s+t}(e_n;\F_p)
\]
as an algebra.
We then use \Cref{thm: Baker Richter B.2} to show that the spectral sequence collapses for $n\leq 4$.
This is enough to compute the homology of connective Morava $\moravaE$-theory up to multiplicative extensions.
We resolve some of the multiplicative extensions in \Cref{subsec: extension problems}.

\subsection{The \texorpdfstring{$\Page{2}$}{E\^{}2}-page as an algebra}
In order to compute $\Tor^{BP_*}_*(\HH_*(BP;\F_p),e_{n*})$, we first recall the descriptions of the algebras involved:
\begin{align*}
	BP_* & \iso \Z_{(p)}[v_1,v_2,\ldots],\\
	\HH_*(BP;\F_p) &\iso 
	\begin{cases}
		\F_2[\xi_1^2,\xi_2^2,\ldots] &\text{ if } p = 2;\\
		\F_p[\xi_1,\xi_2,\ldots] &\text{ if } p \neq 2,
	\end{cases}\\
	e_{n*} &\iso \W(\F_{p^n})[[u_1,u_2,\dots,u_{n-1}]][u]
\end{align*}
The generators are graded as follows:
\begin{align*}
	\deg(v_i) &= 2(p^i-1) & i\geq 1,&\\
	\deg(\xi_i^2) &= 2(2^i-1) & i\geq 1,& \text{ for } p=2,\\
	\deg(\xi_i) &= 2(p^i-1) & i \geq 1,& \text{ for } p\neq 2,\\
	\deg(u_i) &= 0,&\\
	\deg(u) & = 2.
\end{align*}
See \cite{GreenBook} for $\HH_*(BP;\F_p)$ and $BP_*$.
Here, $\W(\F_{p^n}) = \Z_p[\zeta_n]$ is the ring of $p$-typical Witt vectors over the field $\F_{p^n}$, where $\ZZ_p$ denotes the $p$-adic integers and $\zeta_n$ is a primitive $(p^n-1)^\mathrm{st}$ root of unity.

$\HH_*(BP;\F_p)$ is an algebra over $BP_*$ via the Hurewicz map.
This map sends $p$ and each $v_i$ to zero, so as an $BP_*$-module, $\HH_*(BP;\F_p)$ is isomorphic to $\bigoplus \F_p$.
The $BP_*$-algebra structure on $e_{n*}$ is induced from the map constructed in \Cref{lemma:map to morava}, which maps
\[ 
	v_i \mapsto 
	\begin{cases}
	u^{p^i-1}u_i &\text{ if } 1 \leq i \leq n-1\\
	u^{p^i-1}&\text{ if } i = n\\
	0 &\text{ if } i > n.
	\end{cases}
\]

\begin{lemma}\label{lemma: reduction}
	$\Tor^{BP_*}_*(\HH_*(BP;\F_p),e_{n*}) \iso \HH_*(BP;\F_p) \tensor_{\F_p} \Tor^{\ZZ_{(p)}[v_1,\dotsc, v_n]}_*(\F_p,e_{n*}) \tensor_{\F_p} \ext{\F_p}{\ov_{n+1},\ov_{n+2},\ldots}$.
\end{lemma}
Note that $\ZZ_{(p)}[v_1,\dotsc, v_n] \iso \BPn_*$, so the remaining part $\Tor^{\ZZ_{(p)}[v_1,\dotsc, v_n]}_*(\F_p,e_{n*})$ can be interpreted in terms of $\BPn$.
\begin{proof}
	Since $\HH_*(BP;\F_p)\iso \bigoplus \F_p$ as a $BP_*$-module, we have the isomorphism
	\[ \Tor^{BP_*}_*(\HH_*(BP;\F_p),e_{n*}) \iso \HH_*(BP;\F_p) \tensor_{\F_p} \Tor^{BP_*}_*(\F_p,e_{n*}). \]
	Next, we compute $\Tor^{BP_*}_*(\F_p,e_{n*})$ by resolving $\F_p$ over $BP_*$ using the Koszul complex $K(p, v_1, \dotsc)$.
	We write $\ov_i$ for the element with $\partial(\ov_i) = v_i$.
	Since for $i > n$, $\ov_i$ is a cycle in $K(p, v_1, \dotsc) \otimes_{BP_*} e_{n*}$, we obtain the isomorphism
	\[
	\Tor^{BP_*}_*(\F_p,e_{n*}) \iso \Tor^{\ZZ_{(p)}[v_1,\dotsc, v_n]}_*(\F_p, e_{n*}) \tensor_{\F_p} \ext{\F_p}{\ov_{n+1},\ov_{n+2},\ldots}. \qedhere
	\]
\end{proof}

We will compute $\Tor^{\ZZ_{(p)}[v_1,\dotsc, v_n]}_*(\F_p,e_{n*})$ using the Koszul complex $K(p, v_1, \dotsc, v_n)$.
We use the convention that $v_0=\set{0, 1, \dotsc, n}$, $u_0 = p$ and $u_n = 1$.
Then we have that
\[ \Tor^{\ZZ_{(p)}[v_1,\dotsc, v_n]}_*(\F_p,e_{n*}) = H_*(\sK) \]
where $\sK := K(p, v_1, \dotsc, v_n) \otimes_{\ZZ_{(p)}[v_1,\dotsc, v_n]} e_{n*} = K(u_0 u^{w(0)}, u_1 u^{w(1)}, \dotsc, u_n u^{w(n)})$, and $w(i) = p^i-1$.

In order to describe $H_*(\sK)$, we need to set up some notation.
Recall that $[n] = \set{1, \dotsc, n}$ for $n \in \NN$.
In addition, we set $[n]_0 := [n] \cup \set{0}$.
For $I = \set{i_1, \dotsc, i_r} \subset [n]_0$ with $i_1 < \dotsb < i_r$ we set $\ov_I := \ov_{i_1} \wedge \dotsm \wedge \ov_{i_r}$ and $\alpha_I := u^{w(i_1)}$, extending the notation $\ov_i$.
\begin{lemma}\label{lem:generators}
	The homology of $\sK$ is generated by $1$ and the homology classes of
	\[ f_I := \frac{1}{\alpha_I}\partial(\ov_I) \]
	for $I \subseteq [n]$, $\#I \geq 2$, as an $e_{n*}$-module.
\end{lemma}
\noindent Note that we only need $f_I$ with $0 \notin I$ as generators.
\begin{proof}
    First, we show that the $f_I$ with $I \subseteq [n]_0$ generate the cycles of $\sK$.
	For this, we need to show that the elements $f_I$ are well-defined.
	For this, consider a set $I \subseteq [n]_0$ and set $i := \min I$.
	Since $w$ is monotonic it holds that $u^{\w{i}} \mid \partial \ov_k$ for all $k \in I$, and thus $f_I$ is well-defined.
	Moreover, each $f_I$ is a cycle, because $0 = \partial\partial \ov_I = \alpha_I \partial f_I$ and $\alpha_I$ is a non-zero divisor on $\sK$.
	
	It remains to show that every cycle in $\sK$ can be written as a linear combination of the $f_I$'s.
	We will use the lexicographic order on the power set of $[n]_0$ to prove this.
	For any element $c = \sum_{J \subseteq [n]_0}c_J \ov_J \in \sK$ we call $\supp(c) := \set{J \subseteq [n]_0 \with c_J \neq 0}$ its support.
	Moreover, the \emph{leading term} of $c$ is $c_J \ov_J$ for $J = \max(\supp(c))$.

	Consider a cycle $c:=\sum_{J}c_J \ov_J \in \sK$.
	Let $J_0 := \max(\supp(c))$ and $j_0 := \min(J_0)$.
	The $\ov_J$ are linearly independent, so we can consider the coefficient of $\ov_{J_0 \setminus {j_0} }$ in $\partial c$ to obtain that
	\[ 
	c_{J_0} \partial\ov_{j_0} + \sum_{j \in [n]_0 \setminus J_0} \pm c_{(J_0 \setminus {j_0}) \cup {j}} \partial\ov_{j}  = 0 
	\]
	By the definition of $J_0$ and $j_0$, only the coefficients $c_{(J_0 \setminus {j_0}) \cup {j}}$ with $j < j_0$ are non-zero.
	Hence $c_{J_0} \partial\ov_{j_0} = c_{J_0} u^{\w{j_0}} u_{j_0}$ is contained in the ideal $(u_j \with 1 \leq j < j_0) \subset e_{n*}$.
	As $u_{j_0}$ is a non-zero divisor modulo this ideal or a unit and $u$ is an indeterminate, it follows that already $c_{J_0}$ is contained in the ideal.
	Hence there is a presentation of $c_{J_0}$ as $c_{J_0} = \sum_{1 \leq j < j_0} s_j u_j$ with $s_j \in e_{n*}, 1 \leq j < j_0$.
	Consider the cycle
	\[ 
	c_1 := \sum_{1 \leq j < j_0} s_j f_{J_0 \cup {j}}. 
	\]
	Since $j < j_0 = \min(J_0)$, the leading term of $c_1$ is
	\[ 
	\sum_{1 \leq j < j_0} s_j u_j \ov_{(J_0 \cup {j})\setminus{j}} = \left(\sum_{1 \leq j < j_0} s_j u_j\right) \ov_{J_0} = c_{J_0} \ov_{J_0}. 
	\]
	Hence $c' := c - c_1$ is a cycle with a strictly smaller leading term than $c$.
	As there are only finitely many sets $I \subseteq [n]_0$, the claim follows by induction.
	
	Finally, note for $I \subseteq[n]_0$ with $0\in I$, it holds that $\alpha_I = 1$ and thus $f_I = \partial(\ov_I)$ is a boundary.
	Hence, we only need $f_I$ with $I \subseteq [n]$ to generate the homology of $\sK$.
\end{proof}

Let $A := \ext{e_{n*}}{f'_I \with I \subseteq [n], \#I \geq 2}$ be the exterior $e_{n*}$-algebra with the indicated generators.
We endow $A$ with a bigrading by setting 
$\deg f'_I := (\#I - 1, \sum_{i \in I \setminus \min(I)} 2w(i))$.
Here, the first component of the grading is to be interpreted as a homological grading, and the second component is an internal grading.
In particular, the commutativity relation of $A$ is
\[ f'_I f'_J = (-1)^{(\#I-1)(\#J-1)} f'_J f'_I.\]

By the previous lemma, we have a map
\begin{align*}
	\psi\colon A &\longrightarrow \sK\\
	f'_I &\longmapsto f_I
\end{align*}
of $e_{n*}$-algebras, whose image is the cycles of $\sK$.
It induces a surjective map $[\psi]\colon A \to H_*(\sK)$.

\begin{lemma}\label{lem:relations}
	The kernel of $[\psi]$ is generated by the following polynomials:
	\begin{align*}
	u^{w(i)} u_i &\quad\text{for } 0 \leq i \leq n;\\
	u^{w(\min I)} f'_I &\quad \text{for } I \subseteq [n], \#I \geq 2;\\
	u_a f'_{I \cup b} - u_b f'_{I \cup a} &\quad \text{for } I \subseteq [n],\#I\geq2,\  a,b \in [n] \text{ with } a,b < \min(I)
	\end{align*}
	and 
	\[
	f'_I\cdot f'_J -
	\begin{cases}
	(-1)^{m(I \setminus i_0, J)} u_{i_0} f'_{(I \setminus i_0) \cup J} &\text{ if } i_0\geq j_0 \text{ and } (I \setminus i_0) \cap J  = \emptyset; \\
	(-1)^{m(I, J \setminus j_0)} u_{j_0} f'_{I \cup (J \setminus j_0)} &\text{ if } j_0\geq i_0 \text{ and } I \cap (J\setminus j_0) = \emptyset; \\
	0 & \text{ otherwise} 
	\end{cases}
	\]
	for all $I, J \subset [n],\#I\geq2,\#J \geq2$, where $i_0 := \min(I)$, $j_0 := \min(J)$, and $m(A,B) := \#\set{(a,b)\in A\times B\with a > b}$.
\end{lemma}

\begin{proof}
	Let $\af \subset A$ denote the ideal with the given generators.
	First, we show that $\af \subseteq \ker([\psi])$.
	For this we need to show that the classes of the $f_I$ satisfy the given relations.
	It is clear that $u^{w(i)} u_i = \partial \ov_i$ and $u^{w(\min I)} f_I = \partial \ov_i$ are boundaries.
	Next, we show that $u_a f_{I \cup b} - u_b f_{I \cup a}$ is a boundary for $I \subseteq [n]$, $a,b \in [n]$ with $a,b < \min(I)$.
	For this, we compute
	\[\begin{aligned}
	0 &= \partial \partial \ov_{\set{a,b} \cup I} = \partial\left(\partial\ov_a \ov_{{b} \cup I} - \partial\ov_b \ov_{{a} \cup I} + \sum_{i \in I} \partial \ov_i \ov_{\set{a,b} \cup I\setminus{i}}\right) \\
	&= \partial\ov_a \partial \ov_{{b} \cup I} - \partial\ov_b \partial \ov_{{a} \cup I} + \sum_{i \in I} \partial \ov_i \partial\ov_{\set{a,b} \cup I\setminus{i}}\\
	&= u_a u^{\w{a}} u^{\w{b}} f_{{b} \cup I} - u_b u^{\w{b}} u^{\w{a}} f_{{a} \cup I} + \sum_{i \in I} u_i u^{w(i)} \partial\ov_{\set{a,b} \cup I\setminus{i}} \\
	& u^{\w{a}} u^{\w{b}} \left(u_a f_{{b} \cup I} - u_b f_{{a} \cup I} + \sum_{i \in I} u_i u^{\w{i} - \w{a} - \w{b}} \partial\ov_{\set{a,b} \cup I\setminus{i}} \right).
	\end{aligned}\]
	An elementary computation shows that $\w{i} - \w{a} - \w{b} \geq 0$ if $i > a,b$.
	Thus we have that 
	\begin{equation*}
	u_a f_{{b} \cup I} - u_b f_{{a} \cup I}  = - \sum_{i \in I} u_i u^{\w{i} - \w{a} - \w{b}} \partial\ov_{\set{a,b} \cup I\setminus{i}}
	\end{equation*}
	is a boundary.
	
	Next, we show that there are no other linear generators of $\ker([\psi])$.
	This is very similar to our argument in the proof of \Cref{lem:generators} above.
	In homological degree $0$, $\af$ clearly contains all the preimages of the boundaries.
	In higher homological degree, consider an element $r := \sum_{J}c_J f'_J \in \ker([\psi])$.
	Then $\psi(r)$ is a boundary, so there exists an element $c = \sum_{J'} c'_{J'} \ov_{J'}$ such that $\partial c = \psi(r)$.
	In other words,
	\begin{equation}\label{eq:relF}
	0 = \sum_{J}c_J f_J - \sum_{J'} c'_{J'} \partial\ov_{J'} = \sum_{J}c_J f_J - \sum_{J'} c'_{J'} \alpha_{J'} f_{J'}
	= \psi\left(\sum_{J}c_J f'_J - \sum_{J'} c'_{J'} \alpha_{J'} f'_{J'}\right).
	\end{equation}
	As $\alpha_{J'} f'_{J'} \in \af$ for all $J'$, we may replace $r$ by $\sum_{J}c_J f'_J - \sum_{J'} c'_{J'} \alpha_{J'} f'_{J'}$ and thus assume that $\psi(r) = 0$ as a cycle.
	
	Next, let $J_0$ be the (lexicographically) largest set in $supp(r)$, and let $j_0 := \min(J_0)$.
	Considering the coefficient of $\ov_{J_0 \setminus j_0}$ in $\psi(r)$, we see that 
	\[ 
	c_{J_0} u_{j_0} + \sum_{j \in [n] \setminus J_0} c_{(J_0 \setminus {j_0}) \cup {j}} u_j  = 0.
	\]
	Now we argue as above that $c_{J_0}$ is contained in the ideal $(u_j \with 1 \leq j < j_0) \subset e_{n*}$ and can thus be written as $c_{J_0} = \sum_{1 \leq j < j_0} s_j u_j$ with $s_j \in e_{n*}, 1 \leq j < j_0$.
	Consider the element
	\[ 
	r_1 := \sum_{1 \leq j < j_0} s_j \Big(u_j f'_{J} - u_{j_0} f'_{{j} \cup J \setminus\set{j_0}}\Big) \in \af,
	\]
	As before, $r$ and $r_1$ have the same leading term, so we may replace $r$ by $r - r_1$.
	By induction, it follows that $r$ can be written as a sum of terms of this form, and thus $r \in \af$.
	
	Finally, we show the formula for the product $f_I f_J$.
	For this let $I, J \subseteq [n]$, $\#I \geq 2, \#J \geq 2$ with $i_0 := \min(I), j_0 := \min(J)$. By symmetry we may assume that $i_0 \leq j_0$. We start with a short computation:
	\begin{align*}
	f_I\cdot f_J &= \frac{1}{\alpha_I \alpha_J} (\partial \ov_I)(\partial \ov_J)
	= \frac{1}{\alpha_I \alpha_J} \partial(\ov_I \cdot \partial \ov_J)
	= \frac{1}{\alpha_J} \sum_{j \in J} \partial(\ov_j) \frac{1}{\alpha_I} \partial(\ov_{I} \cdot \ov_{J\setminus j})\\
	&= \sum_{j \in J} u_j u^{\w{j} - \w{j_0} - \w{i_0}} \partial(\ov_{I} \cdot \ov_{J\setminus j}).
	\end{align*}
	For $j > j_0$ we have that $\w{j} - \w{j_0} - \w{i_0} \geq 0$, because $i_0 \leq j_0$.
	Thus, the only term which is possibly not a boundary is the term with $j = j_0$.
	If $I \cap J\setminus{j_0} \neq \emptyset$, then $\ov_I \cdot \ov_{J\setminus {j_0}} = 0$.
	Otherwise, we have that
	$u_{j_0} u^{\w{j_0} - \w{j_0} - \w{i_0}} \partial(\ov_{I} \cdot \ov_{J\setminus j}) = u_{j_0} f'_{I \cup J \setminus j}$.
	Hence the product of $f_I$ and $f_J$ is as claimed.
\end{proof}

Altogether, we have proven the following result:
\begin{thm}\label{thm:E2}
	It holds that
	\[
		\Tor^{BP_*}_*(\HH_*(BP;\F_p),e_{n*}) \iso
		\HH_*(BP;\F_{p}) \tensor_{\F_{p}} A/\af \tensor_{\F_{p}} \ext{\F_{p}}{\ov_{n+1},\ov_{n+2},\ldots}
	\]
	as bigraded $e_{n*}$-algebras, where $\af$ is the ideal generated by the polynomials given in \Cref{lem:relations}, and the right-hand side is graded as follows:
	\begin{itemize}
		\item $\HH_*(BP;\F_p)$ in concentrated in homological degree $0$ and has internal degrees equal to the total degrees,
		\item and $\ov_{n+k}$ for $k \geq 1$ has homological degree $1$ and internal degree $2(p^{n+k}-1)$
	\end{itemize}
\end{thm}

In addition to its algebra structure, $\Tor^{BP_*}_*(\HH_*(BP;\F_p),e_{n*})$ also has Massey products. 
See \Cref{subsec: massey} for definitions and conventions regarding Massey products.
The following result is crucial for our application.
It was inspired by \cite[Proposition 5.3]{BakerRichterAdams}.

\begin{prop}
	\label{prop:massey for f_I}
	Let $I, J \subseteq [n]$ be two disjoint sets with $\min(J) > \min(I)$.
	Then
	\[ 
		(-1)^{\#I + m(I, J)} f_{I \cup J}\in \<f_I,u^{\w{\min J }},f_J\> 
	\]
	 in $\Tor^{BP_*}_*(\HH_*(BP;\F_p),e_{n*})$.
\end{prop}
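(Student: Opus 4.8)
The plan is to compute the bracket directly in the Koszul complex $\sKt = K(0, u_i u^{\w{i}}\colon 1\leq i\leq n)$ by exhibiting an explicit defining system. By \Cref{lemma: reduction} and the identifications in the proof of \Cref{prop:koszulhomology}, the three classes $f_I, u^{\w{j_0}}, f_J$ all lie in the tensor factor $\Tor^{\BPn_*}(\F_p,e_{n*}) \iso E_{\F_p}(\oo{p})\tensor_{\F_p} A/\af$, which is computed by $\sKt$; since the relevant inclusions of tensor factors are maps of algebras, it suffices to compute the bracket there, where $f_I = [f''_I]$ with $f''_I = \alpha_I^{-1}\partial\ov_I$. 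First I would check that the bracket is defined: monotonicity of $\w{}$ and $j_0 > i_0 := \min(I)$ give $u^{\w{j_0}} f_I = u^{\w{j_0}-\w{i_0}}\cdot u^{\w{i_0}} f_I = 0$ from the relation $u^{\w{i_0}} f'_I\in\af$, while $u^{\w{j_0}} f_J = u^{\w{\min J}} f_J = 0$ is itself a defining relation of $\af$.

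Setting $x := f''_I$, $y := u^{\w{j_0}}$ and $z := f''_J$, the key observation is that both relevant products are already \emph{visible} boundaries in $\sKt$:
\[
x\,y \;=\; u^{\w{j_0}-\w{i_0}}\,\partial\ov_I \;=\; \partial\bigl(u^{\w{j_0}-\w{i_0}}\ov_I\bigr), \qquad y\,z \;=\; \partial\ov_J .
\]
Hence I can take the defining system
\[
s := (-1)^{|x|+1}\,u^{\w{j_0}-\w{i_0}}\ov_I, \qquad t := (-1)^{|y|+1}\ov_J = -\ov_J,
\]
so that $\partial s = \h{x}y$ and $\partial t = \h{y}z$; the homological degrees match, since $s$ sits in degree $(\#I-1)+0+1 = \#I = |\ov_I|$ and $t$ in degree $\#J = |\ov_J|$.

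It then remains to evaluate the bracket representative $\h{s}z + \h{x}t$. The one structural input is the Leibniz rule together with the shuffle identity $\ov_I\,\ov_J = (-1)^{m(I,J)}\ov_{I\cup J}$ --- the very sign already used to derive the product formula in \Cref{prop:koszulhomology}. Since $\min(I\cup J) = i_0$, this gives
\[
u^{\w{i_0}} f''_{I\cup J} = \partial\ov_{I\cup J} = (-1)^{m(I,J)}\bigl((\partial\ov_I)\,\ov_J + (-1)^{\#I}\,\ov_I\,(\partial\ov_J)\bigr),
\]
and after cancelling the nonzerodivisor $u^{\w{i_0}}$ this expresses $f''_{I\cup J}$ as a signed combination of exactly the two monomials $f''_I\,\ov_J$ and $u^{\w{j_0}-\w{i_0}}\,\ov_I\,f''_J$ that appear in $\h{s}z+\h{x}t$. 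Substituting and collecting signs leaves only the $f''_{I\cup J}$-term, identifying the bracket with $(-1)^{\#I+m(I,J)} f_{I\cup J}$.

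The only real difficulty is the sign bookkeeping: one must simultaneously track the Koszul signs $\h{\,\cdot\,}$ (recalling that each $\ov_i$ has odd total degree), the shuffle sign $(-1)^{m(I,J)}$ relating $\ov_{I\cup J}$ to $\ov_I\ov_J$, and the Massey-product sign convention of \Cref{subsec: massey}, and verify that the two contributions proportional to $\ov_I f''_J$ cancel so that precisely one term survives. Because a Massey product is a coset of its indeterminacy, producing this single defining system already establishes the membership claim; disjointness of $I$ and $J$ guarantees $\ov_I\ov_J\neq 0$, and the hypothesis $j_0 > i_0$ guarantees both that $u^{\w{j_0}-\w{i_0}}$ is a genuine nonnegative power of $u$ and that $\alpha_{I\cup J} = \alpha_I$, which is what makes the boundaries above land where claimed.
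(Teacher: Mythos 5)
Your proposal is correct and is essentially identical to the paper's proof: the paper works in the same Koszul complex $\sKt$, takes exactly your defining system $s=(-1)^{\#I}u^{\w{j_0}-\w{i_0}}\ov_I$, $t=-\ov_J$, and evaluates $\h{s}\,f_J+\h{f}_I\,t$ by the same Leibniz-plus-shuffle computation, writing the sum directly as $\pm\tfrac{1}{\alpha_I}\partial(\ov_I\ov_J)=\pm(-1)^{m(I,J)}f_{I\cup J}$ (using $\alpha_{I\cup J}=\alpha_I$, valid since $j_0>\min I$), which is the same algebra as your step of solving the Leibniz identity for $f''_I\ov_J$ and substituting. The only caveat --- one your write-up shares with the paper's own displayed computation --- is the deferred sign bookkeeping: strictly applying the stated conventions yields $(-1)^{\#I+m(I,J)+1}f_{I\cup J}$ rather than $(-1)^{\#I+m(I,J)}f_{I\cup J}$, an overall sign that is immaterial both for the membership claim and for its application in \Cref{thm: collapse}.
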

Recall the notation $m(I,J) = \#\set{(i,j)\in I\times J\with i > j}$.
\begin{proof}
	Recall our convention that $\hat{r} = (-1)^{|r| + 1}r$, where $r \in \Tor^{BP_*}_*(\HH_*(BP;\F_p),e_{n*})$ and $|r|$ denotes its total degree.
	However, all elements have even internal degree we may use the homological degree instead.
	Let $i_0 := \min(I)$ and $j_0 := \min(J)$.
	Note that $\hat{f}_I u^{\w{j_0}} = (-1)^{\#I}\partial(u^{\w{j_0}-\w{i_0}}\ov_{I})$ and $\hat{u}^{\w{j_0}} f_J = (-1)^{0+1}\partial\ov_J$.
	Hence the class of the element 
	\begin{align*}
	 ((-1)^{\#I}u^{\w{j_0}-\w{i_0}}\ov_{I})^{\widehat{{}}}\cdot f_J + (-1)^{\#I+1} f_I \cdot \ov_J &= 
		- u^{\w{j_0}-\w{i_0}}\ov_{I} \cdot f_J + (-1)^{\#I+1} f_I \cdot \ov_J\\
		&=- \ov_I \cdot \frac{1}{\alpha_I} \partial(\ov_J) + (-1)^{\#I+1} \frac{1}{\alpha_I} \partial(\ov_I) \cdot \ov_J \\
		&= (-1)^{\#I} \frac{1}{\alpha_I} \Big( \partial(\ov_I) \cdot \ov_J + (-1)^{\#I} \ov_I \cdot \partial(\ov_J)\Big) \\
		&= (-1)^{\#I}  \frac{1}{\alpha_I} \partial(\ov_I\ov_J)\\
		&= (-1)^{\#I + m(I, J)} f_{I \cup J}
	\end{align*}
	is contained in the Massey product $\<f_I,u^{\w{j_0}},f_J\>$.
\end{proof}

\subsection{The collapse of the spectral sequence}
We are now able to prove our main theorem.

\begin{thm}\label{thm: collapse}
	The spectral sequence
	\[
	\Tor^{BP_*}_s(\HH_*(BP;\F_p), e_{n*})_t \ssra \HH_{s+t}(e_n;\F_p)
	\]
	collapses at the $\Page{2}$-page when $n\in \{1,2,3,4\}$.
\end{thm}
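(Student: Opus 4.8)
The strategy is to leverage the multiplicativity of the KSS established in \Cref{sec:mult}. In a multiplicative spectral sequence the Leibniz rule propagates differentials across products, so once a set of $\F_{p^n}$-algebra generators of the $E_2$-page is shown to consist of permanent cycles, every differential must vanish and the sequence collapses. Reading the $E_2$-page off from \Cref{lemma: reduction} and \Cref{prop:koszulhomology}, such a generating set is given by the classes of $\HH_*(BP;\F_{p^n})$, the coefficient classes $u, u_1, \dots, u_{n-1}$, the exterior classes $\oo{p}, \ov_{n+1}, \ov_{n+2}, \dots$, and the classes $f_I$ for $2 \leq \#I \leq n$. The whole proof thus reduces to showing that each of these generators survives to $E_\infty$.

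A differential $d_r$ decreases the homological (filtration) degree by $r \geq 2$, so any class in filtration $0$ or $1$ is a permanent cycle for degree reasons. This disposes at once of all the listed generators except the $f_I$, which sit in filtration $\#I - 1$: the classes of $\HH_*(BP;\F_{p^n})$ and the $u, u_i$ lie in filtration $0$, while $\oo{p}$ and the $\ov_{n+k}$ lie in filtration $1$, as do the $f_I$ with $\#I = 2$. In particular the cases $n \leq 2$ are already complete. For $n \in \{3,4\}$ the only remaining generators are the $f_I$ with $\#I = 3$ (filtration $2$) and, when $n = 4$, the class $f_{[4]}$ (filtration $3$).

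These I would treat by induction on $\#I$ via \Cref{prop:massey for f_I}. Given $I$ with $\#I \geq 3$, set $i_0 := \min I$ and $J := I \setminus i_0$, and recall that the singleton convention $f_{\{i_0\}} = u_{i_0}$ is built into the map $\psi$ of \Cref{prop:koszulhomology}. Applying \Cref{prop:massey for f_I} with the pair $(\{i_0\}, J)$ exhibits $f_I$, up to sign, as a member of the triple Massey product $\<u_{i_0}, u^{\w{\min J}}, f_J\>$. Here $u_{i_0}$ and $u^{\w{\min J}}$ lie in filtration $0$, and $f_J$ is a permanent cycle by the inductive hypothesis, the base case $\#J = 2$ having been settled above. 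All three entries are therefore permanent cycles, and \Cref{thm: Baker Richter B.2} applies: its conclusion is precisely that this Massey product is a set of permanent cycles (converging into the corresponding Toda bracket), so $f_I$ is a permanent cycle. Carrying the induction up to $\#I = n$ verifies the last generators, and multiplicativity then forces collapse.

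The step demanding genuine care, and the one that confines the argument to small $n$, is the verification of the hypotheses of \Cref{thm: Baker Richter B.2} at each inductive stage. The products $u_{i_0} u^{\w{\min J}}$ and $u^{\w{\min J}} f_J$ vanish in $E_2$ by the relations generating $\af$ (for the first one using $\w{i_0} \leq \w{\min J}$, since $i_0 < \min J$), so the Massey product is defined; one must further check that the corresponding products vanish in $\pi_*(\HF_p \wedge_{BP} e_n)$ so that the Toda bracket is defined, and that the defining differentials admit no crossing differentials. For $n \leq 4$ every class in sight is concentrated in filtration $\leq 3$, so both points follow purely by degree reasons: the potential targets lie in filtrations whose groups vanish in the relevant internal degrees, leaving no room either for a crossing differential or for a nonzero homotopy product. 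I expect this filtration bookkeeping, rather than any single hard idea, to be the crux, and it is exactly what ties the collapse to the range $n \leq 4$.
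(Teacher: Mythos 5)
Your overall skeleton is the same as the paper's: use the multiplicativity from \Cref{sec:mult}, reduce collapse to showing the algebra generators of the $E_2$-page are permanent cycles, and handle the top classes $f_I$ via \Cref{prop:massey for f_I} together with \Cref{thm: Baker Richter B.2}. Your treatment of everything in filtration $\leq 2$ is correct, though more elaborate than necessary: since all internal degrees are even, $E_2 = E_3$, so the classes $f_I$ with $\#I = 3$ (on the $2$-line) can support no differential at all, and no Massey product argument is needed for them. The only class that genuinely requires the Massey/Toda machinery is $f_{[4]}$ when $n = 4$, and it is exactly there that your argument has a gap.

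The gap is in the definedness of your Toda bracket. Taking $I = [4]$, $i_0 = 1$, $J = \{2,3,4\}$, your bracket $\<u_1, u^{\w{2}}, f_J\>$ is defined in $\HH_*(e_4;\F_p)$ only if $u^{p^2-1}f_{2,3,4} = 0$ holds in $\HH_*(e_4;\F_p)$ itself, not merely in $E_\infty$. This is \emph{not} a consequence of degree reasons: the class $u^{p^2-1}f_{2,3,4}$ has \emph{even} total degree $2p^2+2p^3+2p^4-4$, the product a priori lies in filtration $0$ after the $E_\infty$-vanishing, and the $0$-line $\HH_*(BP;\F_p)\tensor_{\F_p}\bigl(e_{4*}\tensor_{BP_*}\F_p\bigr)$ is concentrated in even total degrees and is nonzero in exactly this degree --- for instance $\xi_4\tensor u^{p^2+p^3-1}$ at odd primes, or $\xi_4^2\tensor u^{11}$ at $p=2$, survive there since $p^2+p^3-1 < p^4-1$. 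So a multiplicative extension of the form $u^{p^2-1}f_{2,3,4} = \xi\cdot u^k$ cannot be excluded by filtration bookkeeping; this is precisely the obstruction the paper acknowledges after \Cref{thm: collapse} (``we as of yet have no way of showing that the product $\alpha_I f_I$ is not divisible by an element of $\HH_*(BP;\F_p)$''), and it is why the collapse cannot simply be run by induction on $\#I$ with singleton splittings. The paper's proof avoids this wall by a different decomposition: it writes $f_{\{i_1,i_2,i_3,i_4\}}$ as an element of $\<f_{i_1,i_2}, u^{\w{i_3}}, f_{i_3,i_4}\>$, splitting $[4]$ into two doubletons. Then both products $f_{i_1,i_2}\cdot u^{\w{i_3}}$ and $u^{\w{i_3}}\cdot f_{i_3,i_4}$ have \emph{odd} total degree, while filtration $0$ is purely even, so their vanishing in $\HH_*(e_4;\F_p)$ (hence the definedness of the Toda bracket) is automatic. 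This parity trick is exactly what your pairing of the degree-zero class $u_{i_0}$ against a cardinality-$3$ class $f_J$ destroys. (A smaller point: applying \Cref{prop:massey for f_I} with a singleton $I$ requires the convention $f_{\{i\}} = u_i$; this is consistent with the map $\psi$ in \Cref{prop:koszulhomology}, but it is not covered by the statement as written and should be justified.)
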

Our argument follows the proof of Theorem 7.3 in \cite{BakerRichterAdams} by Baker and Richter.
The case when $n=1$ is classical as $e_1=ku_p$, the $p$-completion of the connective complex $K$-theory spectrum.

\begin{proof}
	There are $4$ different types of multiplicative generators in our $\Page{2}$-page, namely
	the elements contributed by $\HH_*(BP;\F_p)$, the generators of $e_{n*}$, the $\ov_i$ for $i > n$, and finally the $f_I$.
	There are elements coming from $\HH_*(BP;\F_p)$ and $e_{n*}$ that are on the $0$-line and are necessarily permanent cycles.
	Similarly, the $\ov_i$ are on the $1$-line and thus permanent cycles.
	It remains to show that the $f_I$ are permanent cycles.
	This will establish the collapse of the spectral sequence at the $\Page{2}$-page.
	
	The proof follows by induction on total degree.
	First, $\Page{2}=\Page{3}$ as $d_2$ increases the internal degree by $1$ and every element in the $\Tor$ group has even internal degree.
	This then implies that the $f_{i,j}$ and the $f_{i,j,k}$ are permanent cycles as they are on the $1$- and $2$-line, respectively, of the spectral sequence and the only remaining differentials they could support decrease homological degree by at least $3$.
	Further, the relation $\alpha_{i,j}f_{i,j}=0$ persists to $\HH_*(e_n;\F_p)$.
	This is because there are no elements of odd total degree on the $0$-line.
	Thus there can not be an element of lower filtration and same total degree other than $0$.
	
	From this we can deduce that $f_I$ where $I=\{i_1,i_2,i_3,i_4\}$ is a permanent cycle as follows.
	By \Cref{prop:massey for f_I} we have that $f_I\in \<f_{i_1,i_2},u^{p^{i_3}-1},f_{i_3,i_4}\>$.
	The indeterminacy of this Massey product consists of permanent cycles as they are decomposable with respect to the product structure.
	The classes $f_{i_1,i_2}$, $u^{p^{i_3}-1}$, and $f_{i_3,i_4}$ are each permanent cycles which detect homotopy classes.
	We can form the Toda bracket of these homotopy classes since $\alpha_{i,j}f_{i,j}=0 \in \HH_*(e_n;\F_p)$.
	By \Cref{thm: Baker Richter B.2}, we have that the element $f_I$ in the $\Page{2}$-page detects an element in the Toda bracket
	$\<f_{i_1,i_2},u^{p^{i_3}-1},f_{i_3,i_4}\>$ as long as their are no non-zero crossing differentials.
	This is indeed the case as the domains of the possible crossing differentials are in lower total degree and so must be trivial.
	Thus $f_I$ detects an element in $\HH_*(e_n;\F_p)$ as desired.
\end{proof}

The above argument shows that in fact $f_I$ detects an element in $\HH_*(e_n; \F_p)$ for all $n$ when $\#I \leq 4$.
It seems unlikely that this approach can be pushed further as we have no way of showing that the product $\alpha_I f_I$ is not divisible by an element of $\HH_*(BP;\F_p)$ in general.
This would be possible if we had natural maps from $e_n$ to $e_{n+1}$, but we know of no such maps.
However, we are able to say something about spectrum $\pi_*(\HF_p \wedge_{BP} e_5)$.

\begin{prop}
	The spectral sequence 
	\[
	\Tor^{BP_*}_s(\F_p, e_{5*})_t \ssra \pi_{s+t}(\HF_p \wedge_{BP}e_5)
	\]
	collapses at the $\Page{2}$-page.
\end{prop}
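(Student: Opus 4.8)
The plan is to run the proof of \Cref{thm: collapse} essentially verbatim, the point being that the $E_2$-page here is the one appearing in \Cref{thm: collapse} for $n=5$ with the tensor factor $\HH_*(BP;\F_p)$ deleted, and it is exactly this deletion that makes the one remaining obstruction tractable. First I would note that the spectral sequence is multiplicative: $\HF_p$ and $e_5$ are commutative $S$-algebras receiving $E_1$-maps from $BP$ (through $BP \to \HZ_{(p)} \to \HF_p$ and \Cref{lemma:map to morava}), so the filtration $\Lmu(\HFpb,c_{\bu}(e_5))$ is multiplicative by the results of \Cref{sec:mult}. Resolving $\F_p$ over $BP_*$, the generators $v_k$ with $k>5$ map to zero in $e_{5*}$ and split off an exterior factor as in \Cref{lemma: reduction}, while \Cref{prop:koszulhomology} computes the remaining $\Tor^{\BPn_*}_*(\F_p,e_{5*})$; altogether
\[
\Tor^{BP_*}_*(\F_p, e_{5*}) \iso E_{\F_p}(\oo{p}) \tensor_{\F_p} A/\af \tensor_{\F_p} E_{\F_p}[\ov_6, \ov_7, \ldots].
\]
As an $e_{5*}$-algebra this is generated by the $f_I$ with $I \subseteq [5]$, the exterior classes $\ov_k$ with $k>5$, and $\oo{p}$, so by the Leibniz rule it suffices to prove that each of these generators is a permanent cycle.

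Since every class has even internal degree and $d_r$ changes the internal degree by $r-1$, we have $d_r=0$ for all even $r$; in particular $E_2=E_3$. The classes $\ov_k$, $\oo{p}$ and $f_{i,j}$ (on the $1$-line) and $f_{i,j,k}$ (on the $2$-line) are then permanent cycles, because the surviving odd differentials $d_3,d_5,\ldots$ all lower their homological degree below zero. For $\#I=4$ the only possibly nonzero differential is $d_3$, and it is killed by the argument of \Cref{thm: collapse}: one writes $f_I\in\<f_{i_1,i_2},u^{\w{i_3}},f_{i_3,i_4}\>$ via \Cref{prop:massey for f_I} and applies \Cref{thm: Baker Richter B.2}, the required vanishings $f_{i_1,i_2}u^{\w{i_3}}=0$ and $u^{\w{i_3}}f_{i_3,i_4}=0$ being of odd total degree and hence persisting to homotopy because the $0$-line is concentrated in even degrees. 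Since $I\subseteq[5]$, the only generator not yet handled is $f_{[5]}$, which lies on the $4$-line; parity kills $d_2$ and $d_4$ and the homological degree kills $d_r$ for $r\geq5$, so I only need $d_3(f_{[5]})=0$.

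For this I would again combine \Cref{prop:massey for f_I} with \Cref{thm: Baker Richter B.2}, now through $f_{[5]}\in\<f_{1,2},u^{\w{3}},f_{3,4,5}\>$. The three entries are permanent cycles by the previous paragraph, and the absence of crossing differentials follows as in \Cref{thm: collapse} by induction on total degree: the sources of any crossing differential lie in strictly lower total degree, where everything is already a permanent cycle. The genuinely new point is that the Toda bracket must be defined, i.e. that $f_{1,2}u^{\w{3}}=0$ and $u^{\w{3}}f_{3,4,5}=0$ in $\pi_*(\HF_p\wedge_{BP}e_5)$. The first is of odd total degree and vanishes as above. The second, $u^{\w{3}}f_{3,4,5}$, has even total degree; it is zero in $E_2$, so as a homotopy class it has filtration at most $1$, it cannot be detected in filtration $1$ (whose total degrees are all odd), and the only possible nonzero value would lie in filtration $0$, that is in $e_{5*}$, in internal degree $2(\w{3}+\w{4}+\w{5}+1)$.

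The main obstacle, and the reason the statement is about $\pi_*(\HF_p\wedge_{BP}e_5)$ rather than $\HH_*(e_5;\F_p)$, is to see that this last target group vanishes. Here I would use that $u^{\w{5}}$ is the image of $v_5$ and so is zero on the $0$-line (it is the relation $u^{\w{5}}u_5=u^{\w{5}}$ of $\af$, with the convention $u_5=1$); hence $u^m=0$ on the $0$-line for every $m\geq\w{5}$. Every class of internal degree $2(\w{3}+\w{4}+\w{5}+1)$ on the $0$-line is a multiple of $u^{\w{3}+\w{4}+\w{5}+1}$, and $\w{3}+\w{4}+\w{5}+1>\w{5}$, so the group is zero. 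Therefore $u^{\w{3}}f_{3,4,5}=0$ in homotopy, the Toda bracket is defined, and \Cref{thm: Baker Richter B.2} makes $f_{[5]}$ a permanent cycle, collapsing the spectral sequence. This is precisely the step that fails for $\HH_*(e_5;\F_p)$: there the $0$-line also contains $\HH_*(BP;\F_p)$, whose positive-degree classes occupy internal degree $2(\w{3}+\w{4}+\w{5}+1)$ and leave room for $u^{\w{3}}f_{3,4,5}$ to be divisible by an element of $\HH_*(BP;\F_p)$.
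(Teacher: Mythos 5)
Your proposal is correct and takes essentially the same route as the paper's own proof: everything except $f_{\{1,2,3,4,5\}}$ is handled exactly as in \Cref{thm: collapse}, and the remaining class is dealt with via $f_{\{1,2,3,4,5\}}\in\langle f_{1,2},u^{p^3-1},f_{3,4,5}\rangle$ and \Cref{thm: Baker Richter B.2}, the crux in both arguments being that $u^{p^3-1}f_{3,4,5}=0$ holds in homotopy (not just in the associated graded) because the only candidates in that even total degree lie on the $0$-line and are multiples of $u^m$ with $m\geq p^5-1$, which vanish since $u^{p^5-1}=0$. The additional details you supply (parity of differentials, absence of crossing differentials, and the contrast with $\HH_*(e_5;\F_p)$) are exactly the points the paper makes in \Cref{thm: collapse} and the discussion surrounding the proposition.
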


\begin{proof}
	The argument above works to establish that everything is a permanent cycle with the exception of the element $f_I$ where $I=\{1,2,3,4,5\}$.
	This will follow from the fact that $u^{p^3-1}f_{3,4,5}=0$ in $\pi_*(\HF_p \wedge_{BP}e_5)$.
	The bidegree of $u^{p^3-1}f_{3,4,5}$ is $(2,2p^3-2+2p^4-2+2p^5-2)$ and it has total degree $2p^3+2p^4+2p^5-4$.
	This product could be non-zero in the target if there were an element in lower filtration and the same total degree.
	The only elements in lower filtration and positive total degree are multiples of $u$.
	In filtration $0$ we have $u_i$ multiples of powers of $u$.
	In order to reach that total degree the exponent of $u$ must be at least $p^5$, but $u^{p^5-1}=0$.
	In filtration $1$ we have $f_{i,j}$ and these are all of odd degree so no product of an $f_{i,j}$ and a power of $u$ could have the right total degree.
	Therefore we have that $u^{p^3-1}f_{3,4,5}$ is $0$ in the target of the spectral sequence and not just in the $\Page{\infty}$-page.
	Now we use \Cref{prop:massey for f_I} and \Cref{thm: Baker Richter B.2} to show that $f_I$ is a permanent cycle as in the proof of \Cref{thm: collapse}.
\end{proof}

While this is not a direct computation regarding the homology of $e_5$, it does give us a lot of information since the $\Page{2}$-page splits as a tensor product by \Cref{lemma: reduction}.
This relative smash product $\pi_*(\HF_p \wedge_{BP} e_5)$ still contains all of the new interesting classes $f_I$.

Note that these results also imply the following.
\begin{cor}
	The spectral sequence 
	\[
	\Tor^{\BPn_*}_s(\F_p,e_{n*})_t \ssra \pi_{s+t}(\HF_p\wedge_{\BPn}e_n)
	\]
	collapses for $n\leq 5$.
\end{cor}
This follows as the map of associative $S$-algebras $BP \lra \BPn$ induces a map of spectral sequences.
This allows us to compute the differentials on all classes in the target by computing them in the source since the map is surjective on $\Tor$.
This fact will be used in the next section.

\subsection{Multiplicative extensions}
\label{subsec: extension problems}
In this section we show that many relations of the form $xy=0$ in the $\Page{\infty}$-page of the spectral sequence
\[
\Tor^{\BPn_*}_s(\F_p,e_{n*})_t \ssra \pi_{s+t}(\HF_p\wedge_{\BPn}e_n)
\]
in fact hold in homotopy as well.
After this we establish that $\HH_*(e_n;\F_p)$ splits into a tensor product of rings, see \Cref{cor: mult splitting}.
We will use the collapse of the above spectral sequences established in \Cref{thm: collapse}. Therefore we assume $n\leq 4$ throughout this section.

\begin{lemma}
	\label{prop: mult extensions}
	We have the following relations in the ring $\pi_*(\HF_p\wedge_{\BPn} e_n)$:
	\begin{enumerate}
		\item $u_i u^{p^i-1}=0$ and $u^{p^n-1}=0$,
		\item $u^{p^i-1}f_{i,j}=0$,
		\item $\alpha_If_I=0$ whenever $n\in I$,
		\item $f_I^2=0$ whenever $n\in I$,
		\item $f_{i,j}^2=0$ for $p = 2$,
		\item $f_{1,2,3}^2=0$.
	\end{enumerate}
\end{lemma}
Since we are working with graded commutative rings, squares of odd degree elements are always $0$, except when $p=2$.
The only other relation of the form $xy=0$ that is not covered by \Cref{prop: mult extensions} is $u^{p-1}f_{1,2,3}=0$ when $n=4$.
For larger $n$ there are other possible multiplicative extensions that we are unable to address.
\begin{proof}
	\begin{asparaenum}
	\item The relations regarding $u$ and $u_i$ hold because they take place in filtration $0$ and so there is no room for possible extensions.
	
	\item Recall from the proof of \Cref{thm: collapse} that $u^{p^i-1}f_{i,j}=0$ as it is in odd total degree and the only elements in lower filtration are in even total degree.
	This is also our base case for the induction proof of the next relation.
	
	\item In each of the following cases all we have to do is show that there are no eligible candidates in the given total degree of lower filtration.
	Since we have a basis for $\Tor$, and hence for $\pi_*(\HF_p \wedge_{\BPn} e_n)$, this amounts to ruling out classes of the form $qu^kf_I$ where $q\in \pi_0(e_n)$ and $k$ and $I$ are of the appropriate degree.
	Sometimes $q$ will not play a role as $u^kf_I$ is already $0$.
	
	First let us consider $\alpha_I f_I=0$ when $n\in I=\{i_1,i_2,i_3,\ldots, i_m\}$.
	Assume that this is true for all $J$ of cardinality less than $m$.
	Thus we are looking for an element in total degree $(-1)+\sum_{i\in I}2p^i-1$ in filtration less than $m-1$.
	This is impossible for degree reasons.
	Since the parity of the total degree is the same as the parity of $\#I-1$ we see that the only way to have an element in the same total degree is to be in an even number of filtrations lower.
	So the next element in a lower filtration of the highest possible total degree is $qu^jf_{I''}$ for some $j$, where $I''$ is $I$ without its two smallest elements.
	The difference in total degree between  $f_{I''}$ and $\alpha_I f_I$ is $2p^{i_1}-1+2p^{i_2}-1+2p^{i_3}-1$ thus $j>p^{i_3}-1$.
	However, this product is already $0$ by our induction hypothesis.
	This relies on the already observed fact that $u^{p^i-1}f_{i,j}=0$.
	
	\item The relation $f_I^2$ is more straightforward.
	Suppose that $qu^kf_J$ is in the same total degree as $f_I^2$, which is $(-2)+\sum_{i\in I}4p^i-2$.
	By the relation $\alpha_Jf_J=0$ we see that $k<p^{j_1}-1$.
	Therefore the total degree of $qu^kf_J$ is less than $\sum_{j\in J}(2p^j-1) -1$.
	However this will never be large enough as the total degree of $f_I^2$ is larger than $4p^n-2$ and we have
	\[\begin{aligned}
	4p^n-2 > \sum_{i=1}^n2p^i-1
	> \sum_{j\in J}2p^j-1
	>  \vert qu^k f_J\vert.
	\end{aligned}\]
	
	\item Assume that $p = 2$ and consider the class $f_{i,j}$ in total degree $2^{j+1}-1$.
	Its square is in filtration $2$ and total degree $2\cdot 2^{j+1}-2$ and so $qu^{2^{j+1}-1}$ is the only possible class other than $0$ that $f_{i,j}^2$ could be.
	If $j=n-1$ then this power of $u$ is $0$.
	If $j\neq n-1$ then we obtain the relation $u^{2^i-1}qu^{2^{j+1}-1}=0$ as we know that $u^{2^i-1}f_{i,j}=0$.
	But this cannot be $0$ unless $q$ is divisible by $u_k$ for $k<i$.
	If this were the case though then $qu^{2^{j+1}-1}=0$ since $i<j$.
	
	\item The last case is the square of the element $f_{1,2,3}$ in total degree $2p^2-1+2p^3-1$.
	This is covered by other cases except when $n=4$.
	The possible elements in the same total degree are $a:=qu^{2p^2-1+2p^3-1}$ and $qu^mf_{i,j,k}$ in total degree $2m+2p^j-1+2p^k-1$ for $m<p^i-1$.
	We will deal with these two cases separately.
	
	First let us consider the case $a$.
	Note that $q=1\in \W(\F_{p^n})$ since if it were divisible by $u_i$, then $a=0$ because $u_iu^{p^i-1}=0$.
	At the prime $2$ the element $a=0$.
	Otherwise we have that $u^{p-1}a=0$.
	This contradicts the fact that $u^{p^4-2}\neq 0$ when $p>2$.
	
	Now consider the possibility that $f_{1,2,3}^2=qu^mf_{i,j,k}$.
	This element is annihilated by $u^{p-1}$ since $f_{1,2,3}$ is.
	If $k\neq 4$ then $f_{i,j,k}$ must be $f_{1,2,3}$ and $m<p-1$ so the element $qu^mf_{i,j,k}$ will not be in high enough total degree.
	However, the element $f_{i,j,4}$ has higher total degree than $f_{1,2,3}^2$ unless $p=2$.
	When $p=2$, $\vert f_{1,2,3}^2\vert =44$ and the only element in this total degree is $qu^3f_{1,2,4}$ which is $0$ by the above relation $\alpha_I f_I=0$ when $n\in I$.
	\qedhere
	\end{asparaenum}
\end{proof}

Now we establish that $\HH_*(e_n;\F_p)$ splits as a tensor product.
The following ring maps will help us split the homology of connective Morava $\moravaE$-theory.
\[
\pi_*(\HF_p\wedge_{\BPn} e_n) \sr{\psi}\lla \HH_*(e_n;\F_p) \sr{\varphi}\lra \HH_*(\BPn;\F_{p^n})
\]
The ring map $\psi$ is induced by the maps from $S$ to $\BPn$,
where $S$ is the sphere spectrum, and therefore $\psi$ is a map of rings.
We will compute $\psi$ by considering the maps $BP \lra \BPn$ and the map from $\HF_p\wedge BP$ to $\HF_p$.
We have the map 
\[
\varphi': \HH_*(e_n;\F_p)\lra \HH_*(\HF_{p^n};\F_p)
\]
which is obtained by first taking $0$th Postnikov sections and then taking the quotient by the maximal ideal.
The map $\varphi$ is constructed by noting that the image of $\varphi'$ is contained in the image of $\HH_*(\BPn;\F_{p^n})$ after applying the twist map.
To compute each map involved we will consider the relevant map of spectral sequences where the $\Page{2}$-pages can always be computed using the ``same'' underlying Koszul complex.
Here we record some basic facts about the above maps.

\begin{lemma}\leavevmode
\label{lem: maps for splitting}
	\begin{enumerate}
	\item The map $\psi$ takes all generators, other than the unit, coming from $\HH_*(BP;\F_p)$ as well as the $\ov_I$ to $0$.
	\item The map $\varphi'$ takes each $u,u_i, f_I \in \HH_*(e_n;\F_p)$ to $0$.
	\item The classes $\ov_{n+k}\in \Tor^{BP_*}_1(\HH_*(BP;\F_p),e_{n*})$ are sent by $\varphi$ to the conjugates of the classes $\xi_{n+k+1}$ or $\tau_{n+k}$ in the dual Steenrod algebra, when the prime is $2$ or odd respectively.
	\item The map $\varphi'$ factors as
	\[
	\xymatrix{
	\HH_*(e_n;\F_p) \ar[rr]^{\varphi'} \ar[dr]_{\varphi}&
	&
	\HH_*(\HF_{p^n};\F_p)\\
	&
	\HH_*(\BPn;\F_{p^n}). \ar[ur]&
	}
	\]
	\end{enumerate}
\end{lemma}
These facts are proved by showing that the map of $\Page{2}$-pages takes the classes to $0$ and then realizing that there is nothing in lower filtration for the classes to be detected by on the $\Page{\infty}$-page.
\begin{proof}
%To see each of these we look at the induced maps on the Koszul complexes.
	\begin{asparaenum}
	\item The first claim is obvious by considering the following map of spectral sequences
	\[
	\Tor^{BP_*}(\HH_*(BP;\F_p),e_{n*})\lra \Tor^{BP_*}(\F_p,e_{n*}) \lra \Tor^{\BPn_*}(\F_p,e_{n*})
	\]
	each of which comes from a map of ring spectra.
	All of the above spectral sequences collapse since the first does and the maps are surjective on $\Tor$.
	The first map of spectral sequences takes all generators, other than the unit, of $\HH_*(BP;\F_p)$ to zero.
	The second map establishes the second claim as $\ov_{n+k}$ isn't a cycle here and there are no elements in lower filtration for the $\ov_{n+k}$ to be sent to as the $\ov_{n+k}$ are on the $1$-line.
	The claim for $\ov_I$ follows as all such maps are multiplicative.
	
	\item Clearly $\varphi'(u)=0=\varphi'(u_i)$.
	The $f_{i,j}$ is represented by a cycle which is expressed in terms of multiples of $u_i$ and $u_j$.
	Thus the $f_{i,j}$'s must be sent to $0$ on the $\Page{2}$-page.
	There is nothing in lower filtration and the same internal degree of the spectral sequence and so $\varphi'(f_{i,j})=0$.
	Since the map $\varphi'$ is induced by a map of commutative ring spectra, it takes Toda brackets to Toda brackets.
	Thus $\varphi(f_I)\in\varphi(\<f_{i,j},\alpha_{I'},f_{I'}\>)\subset\<0,0,0\>=\{0\}$ where $I'=I\setminus\{i,j\}$.
	\item We establish the third claim by considering the map of spectral sequences induced by 
	\[
	e_n \lra \HF_{p^n}.
	\]
	We use the same Koszul complex to compute $\Tor$.
	Each class $f_I$ is taken to zero, as established above.
	We also understand the spectral sequence
	\[
	\Tor^{BP_*}_s(\HH_*(BP;\F_p),\F_{p^n}) \ssra \HH_*(\HF_p;\F_{p^n})
	\]
	completely as we know what it converges to and so it must collapse.
	That these classes detect the conjugates follows from the discussion in Chapter 4 Section 2 starting on page 114  of \cite{GreenBook}.
	
	\item We have that $\HH_*(e_n;\F_p)$ is multiplicatively generated by
	\begin{itemize}
	    \item the classes $u_i,u$ coming from $e_{n*}$,
	    \item the classes $f_I$,
	    \item the classes $\ov_{n+k}$,
	    \item classes coming from $\HH_*(BP;\F_p)$, which are detected on the $0$-line.
	\end{itemize}
	We have already established $\varphi'$ of the first three collections actually lift to $\HH_*(\BPn;\F_{p^n})$.
	The last item follows by considering the map of spectral sequences
	\[
	\Tor^{BP_*}_*(\HH_*(BP;\F_p),e_{n*})\lra \Tor^{BP_*}_*(\HH_*(BP;\F_p),\F_{p^n})
	\]
	restricted to the $0$-line.
	This map of rings is induced by the map of commutative $S$-algebras $e_n \lra \HF_{p^n}$ and so induces a map of rings on the $0$-line.
	\qedhere
\end{asparaenum}
\end{proof}

\begin{prop}\label{cor: mult splitting}
	When $n\leq 4$ the homology $\HH_*(e_n;\F_p)$ splits as a tensor product of rings
	\[
	\HH_*(e_n;\F_p)\iso \HH_*(\BPn;\F_{p^n})\tensor_{\F_{p^n}} \pi_*(\HF_p\wedge_{\BPn}e_n).
	\]
\end{prop}

\begin{proof}
	We have the two maps of rings $\varphi$ and $\psi$.
	They induce a map of rings
	\[
	\HH_*(e_n;\F_p) \sr{\varphi\times\psi}\lra \HH_*(\BPn;\F_{p^n}) \times \pi_*(\HF_p \wedge_{\BPn} e_n).
	\]
	This composed with the canonical map
	\[
	\HH_*(\BPn;\F_{p^n}) \times \pi_*(\HF_p \wedge_{\BPn} e_n) \lra \HH_*(\BPn;\F_{p^n}) \tensor_{\F_{p^n}} \pi_*(\HF_p \wedge_{\BPn} e_n)
	\]
	gives us the desired splitting.
	To see that it is an isomorphism we note that it is injective and surjective on the generators and it is a map of $\F_{p^n}$-algebras.
	The injectivity and surjectivity follows from the collapse of the spectral sequence.
\end{proof}
Note that this is a splitting of rings, as there are no known maps of spectra  
$\HF_p \wedge e_n \lra \HF_p \wedge \BPn$
or $\HF_p \wedge_{\BPn} e_n \lra \HF_p \wedge e_n$.
Thus this splitting does not respect higher multiplicative structure, such as power operations.
However, it does resolve various extension problems relating the two tensor factors.

\printbibliography

\end{document}